\documentclass{amsart}

\sloppy
\usepackage[utf8]{inputenc}
\usepackage{amsmath,amssymb,amsthm,units, stmaryrd,stackrel,relsize,bm}
\usepackage[textsize=footnotesize,color=yellow, bordercolor=white]{todonotes}

\newtheorem{theorem}{Theorem}[section]
\newtheorem{conjecture}[theorem]{Conjecture}
\newtheorem{lemma}[theorem]{Lemma}

\newtheorem{claim}[theorem]{Claim}
\newtheorem{corollary}[theorem]{Corollary}

\theoremstyle{definition}
\newtheorem{definition}[theorem]{Definition}

\theoremstyle{remark}
\newtheorem{remark}[theorem]{Remark}

%\DeclareSymbolFont{AMSb}{U}{msb}{m}{n}
%\DeclareMathSymbol{\N}{\mathbin}{AMSb}{"4E}
%\DeclareMathSymbol{\Z}{\mathbin}{AMSb}{"5A}
%\DeclareMathSymbol{\R}{\mathbin}{AMSb}{"52}
%\DeclareMathSymbol{\Q}{\mathbin}{AMSb}{"51}
%\DeclareMathSymbol{\I}{\mathbin}{AMSb}{"49}
%\DeclareMathSymbol{\C}{\mathbin}{AMSb}{"43}
%\newcommand{\comment}[2]{{\color{red}{\bf #2}}}

%%%%%%%%%%%%%
% Comandos JP
%%%%%%%%%%%%%
%%% Comments
%\newcommand\jp[1]{\footnote{{\color{blue}JP: #1}}}
%%% Colors
\usepackage[
    colorlinks=true, 
    citecolor=red,
    linkcolor=blue,
    urlcolor=blue]{hyperref}
%%% Notation set theory
\newcommand\ad{{\mathsf{AD}}}

\newcommand\Ord{{\mathsf{Ord}}}

\newcommand\coll{\text{Coll}}

\newcommand{\PI}{\boldsymbol\Pi}
\newcommand{\SIGMA}{\boldsymbol\Sigma}

%
%
%%% Notation Typography

%%% Inverse Diagonal Dots
\makeatletter
\def\Ddots{\mathinner{\mkern1mu\raise\p@
\vbox{\kern7\p@\hbox{.}}\mkern2mu
\raise4\p@\hbox{.}\mkern2mu\raise7\p@\hbox{.}\mkern1mu}}
\makeatother
%%%%%%%%%%%%%
% Fin Comandos JP
%%%%%%%%%%%%%

%
%
%
%
%%%%%%%%%%%%%
% Macros from last paper
%%%%%%%%%%%%%
\newcommand{\langpm}{\mathcal{L}_{\mathrm{pm}}(\{\dot x_i : i \in \omega\})} 
\newcommand{\lpm}{\mathcal{L}_{\mathrm{pm}}}

\newcommand{\cM}{\mathcal{M}}
\newcommand{\cN}{\mathcal{N}}

\begin{document}
\title{Projective Games on the Reals}
\subjclass[2010]{03E45, 03E60, 03E15, 03E55} 

\keywords{Infinite Game, Determinacy, Inner Model Theory, Game Quantifier, Large
  Cardinal, Mouse} 

\author{Juan P. Aguilera}
\address{Juan P. Aguilera, Department of Mathematics, Ghent University. Krijgslaan 281-S8, 9000 Ghent, Belgium.} 
\address{Institut f\"ur diskrete Mathematik und
  Geometrie, Technische Universit\"at Wien. Wiedner Hauptstrasse 8-10,
  1040 Wien, Austria.} 
\email{aguilera@logic.at}
\author{Sandra M\"uller} 
\address{Sandra M\"uller, Kurt G\"odel
  Research Center, Institut f\"ur Mathematik, UZA 1, Universit\"at
  Wien. Augasse 2-6, 1090 Wien, Austria.}
\email{mueller.sandra@univie.ac.at} 
\thanks{The first-listed author was partially supported by FWO grant 3E017319 and FWF grants P 31063 and P 31955; the second-listed author, formerly known as Sandra Uhlenbrock, was
  partially supported by FWF grant number P 28157.} 
%\begin{document}

\begin{abstract}
Let $M^\sharp_n(\mathbb{R})$ denote the minimal active iterable extender model which has $n$ Woodin cardinals and contains all reals, if it exists, in which case we denote by $M_n(\mathbb{R})$ the class-sized model obtained by iterating the topmost measure of $M_n(\mathbb{R})$ class-many times. We characterize the sets of reals which are $\Sigma_1$-definable from $\mathbb{R}$ over $M_n(\mathbb{R})$, under the assumption that projective games on reals are determined:
\begin{enumerate}
\item for even $n$, $\Sigma_1^{M_n(\mathbb{R})} = \Game^\mathbb{R}\Pi^1_{n+1}$;
\item for odd $n$, $\Sigma_1^{M_n(\mathbb{R})} = \Game^\mathbb{R}\Sigma^1_{n+1}$.
\end{enumerate}
This generalizes a theorem of Martin and Steel for $L(\mathbb{R})$, i.e.,  the case $n=0$.
As consequences of the proof, we see that determinacy of all projective games with moves in $\mathbb{R}$ is equivalent to the statement that $M^\sharp_n(\mathbb{R})$ exists for all $n\in\mathbb{N}$, and that determinacy of all projective games of length $\omega^2$ with moves in $\mathbb{N}$ is equivalent to the statement that $M^\sharp_n(\mathbb{R})$ exists and satisfies $\ad$ for all $n\in\mathbb{N}$.
\end{abstract}
\date{\today}
\clearpage
\maketitle

\setcounter{tocdepth}{1}
%\tableofcontents
\section{Introduction}
In this article, we study the interplay between the determinacy of infinite games on $\mathbb{R}$ whose payoff is projective  and canonical transitive models of set theory that contain $\mathbb{R}$ and finitely many Woodin cardinals. 

Given a set $x$, let $M^\sharp_n(x)$ denote the minimal $\omega_1$-iterable active $x$-premouse with $n$ Woodin cardinals, if it exists, and let $M_n(x)$ denote the proper-class model obtained from $M_n^\sharp(x)$ by iterating its topmost measure $\Ord$-many times. 
A theorem of Woodin states that projective games on $\mathbb{N}$ are determined if, and only if, $M^\sharp_n(x)$ exists for every $n$ and every $x\in\mathbb{R}$. Woodin's proof of determinacy from the existence of $M^\sharp_n(x)$ is unpublished, but the result was later refined by Neeman \cite{Ne04}, who proved local versions of the theorem at each level of the projective hierarchy. 
A proof of the other implication can be found in \cite{MSW}. We refer the reader to Larson \cite{La} and the introduction of Neeman \cite{Ne04} for historical background.

The same way that the model $M_n$ generalizes $L$, the model $M_n(\mathbb{R})$ generalizes $L(\mathbb{R})$. An issue related to projective determinacy for games on $\mathbb{R}$ is that of identifying the sets of reals that are $\Sigma_1$-definable over $M_n(\mathbb{R})$.\footnote{By this, we mean $\Sigma_1$-definable in the language of set theory with additional predicates for $\mathbb{R}$ and the extender sequence of $M_n(\mathbb{R})$.} We denote this collection by $\Sigma_1^{M_n(\mathbb{R})}$. For other related models, the following characterizations are well known:

\begin{theorem}\
\begin{enumerate}
\item (Shoenfield) $\Sigma_1^L = \Sigma^1_2$.
\item (Steel \cite{St95}) $\Sigma_1^{M_{2n}} = \Pi^1_{2n+2}$; $\Sigma_1^{M_{2n+1}} = \Sigma^1_{2n+3}$.
\item (Martin-Steel \cite{MaSt08}) $\Sigma_1^{L(\mathbb{R})} = \Game^\mathbb{R}\Pi^1_1$.
\end{enumerate}
\end{theorem}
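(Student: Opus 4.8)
These three statements lie at very different depths, and I would establish them by entirely separate methods; only the first is elementary. For (1), the plan is to prove both inclusions through Shoenfield absoluteness. To see $\Sigma^1_2 \subseteq \Sigma_1^L$, I would write a $\Sigma^1_2$ set as $A = \{x : \exists y\,\forall z\, R(x,y,z)\}$ with $R$ recursive and replace the inner $\Pi^1_1$ matrix $\forall z\, R(x,y,z)$ by the assertion that the recursive tree searching for a $z$ with $\neg R(x,y,z)$ is wellfounded. Since wellfoundedness is absolute and, by Shoenfield, a witnessing $y$ together with a ranking function can be found in $L[x]\subseteq L$, membership in $A$ reduces to $L$ satisfying a $\Sigma_1$ formula with parameter $x$. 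For the reverse inclusion, writing $A = \{x : L \models \varphi(x,a)\}$ with $\varphi$ of the form $\exists v\,\psi$ and $\psi$ bounded, condensation reflects the witness $v$ to a countable level $L_\alpha$; coding $L_\alpha$ by a real via a wellfounded extensional relation turns membership into the $\Sigma^1_2$ assertion that there is a real coding a wellfounded model of enough of $V=L$ that contains $x,a$ and satisfies $\varphi$.

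For (2) no elementary argument is available, and I would invoke the fine-structural theory of the mice $M_n$. The lower bound $\Pi^1_{2n+2}\subseteq\Sigma_1^{M_{2n}}$ rests on the correctness of $M_{2n}$ at the level $\Sigma^1_{2n+2}$ together with the scale property: membership of a real in a $\Pi^1_{2n+2}$ set is witnessed by an object (a branch through the tree of a scale, or an iteration) that $M_{2n}$ computes correctly and can exhibit $\Sigma_1$-definably. The upper bound follows because the mouse $M_{2n}$, its $\omega_1$-iterability, and hence its $\Sigma_1$-theory are definable at precisely the level $\Pi^1_{2n+2}$, via $Q$-structures and comparison; the odd case is symmetric with $\Sigma$ and $\Pi$ interchanged. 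Underlying all of this are the mouse-set theorems identifying $\mathbb{R}\cap M_n$ with the corresponding projective basis set.

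For (3), the plan is to route through the real-game quantifier. For $\Game^\mathbb{R}\Pi^1_1 \subseteq \Sigma_1^{L(\mathbb{R})}$, a game on reals with $\Pi^1_1$ payoff is determined in $L(\mathbb{R})$, and ``player I has a winning strategy'' is $\Sigma_1$ over $L(\mathbb{R})$ because both the strategy and the verification of its payoff condition can be found there. For the reverse inclusion I would use the scale analysis of $L(\mathbb{R})$: a $\Sigma_1^{L(\mathbb{R})}$ predicate reflects to a statement about some level $J_\alpha(\mathbb{R})$, and one encodes ``there is such a level satisfying $\varphi$'' as a real game in which the two players cooperatively build and continuously verify a model of the relevant fragment, the payoff being the $\Pi^1_1$ condition that the model is wellfounded. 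The main obstacle, and the part the present paper must genuinely generalize, is (2): the existence and iterability of $M_n$, comparison through $Q$-structures, and the exact complexity of its $\Sigma_1$-theory together constitute the heart of the argument, and it is exactly this package that has to be transported from $M_n$ to $M_n(\mathbb{R})$. Part (3) is also delicate, turning on a $\Sigma_1$-definable scale on the universal $\Sigma_1^{L(\mathbb{R})}$ set, but its combinatorial core is lighter than the fine structure demanded by (2).
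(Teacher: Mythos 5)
This theorem is stated in the paper purely as background and is not proved there: the three parts are attributed to Shoenfield, to Steel \cite{St95}, and to Martin--Steel \cite{MaSt08}, so there is no in-paper argument to measure your sketch against. That said, your outline does follow the standard routes: absoluteness plus condensation for (1), the correctness/mouse-set analysis of the $M_n$ for (2), and model games whose payoff is the $\Pi^1_1$ condition of wellfoundedness for (3) --- the last being exactly the template that the body of this paper generalizes to $M_n(\mathbb{R})$ via the games $G^{\text{even}}_\varphi$ and $G^{\text{odd}}_\psi$.

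Two concrete gaps should be flagged. First, in (1) you write that a witness ``can be found in $L[x]\subseteq L$''; this inclusion fails unless $x\in L$. The identity $\Sigma_1^L=\Sigma^1_2$ (and likewise part (2)) must be read in relativized form --- $x\in A$ if and only if $L[x]\models\varphi(x)$ for a fixed $\Sigma_1$ formula $\varphi$ --- since $L$ itself need not contain the real $x$; with that reading your tree/absoluteness argument and the condensation argument for the converse go through. Second, and more seriously, your half-sentence for the inclusion $\Game^\mathbb{R}\Pi^1_1\subseteq\Sigma_1^{L(\mathbb{R})}$ in (3) hides the main difficulty: ``Player I has a winning strategy'' quantifies over functions on finite sequences of reals and is prima facie $\Sigma^2_1$, and it is not evident that a winning strategy, if one exists, can ``be found'' in $L(\mathbb{R})$. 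The standard repair --- and the one this paper generalizes in Lemma \ref{LemmaSigma1Eequiv} --- is to pass to the restricted games $G_A$ for countable $A$ ranging over a club in $\mathcal{P}_{\omega_1}(\mathbb{R})$, where the existence of a winning strategy becomes a projective statement about a real coding $A$ that is absolute to the appropriate countable-level model, and then to reassemble a global strategy from these local ones. Relatedly, parts (2) and (3) carry determinacy hypotheses (projective determinacy, respectively determinacy in or of $L(\mathbb{R})$) that your sketch leaves implicit but that are essential to the cited proofs.
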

\noindent Here, $\Game^\mathbb{R}$ denotes the real-game quantifier (see the following section).

The characterization of $\Sigma_1^{M_{n}}$ depends on the parity of $n$. Our main theorem is a characterization of $\Sigma_1^{M_n(\mathbb{R})}$ that combines the ones for $M_n$ and $L(\mathbb{R})$, and exhibits similar periodicity:

\begin{theorem}\label{theoremPointclass}
Suppose that $M^\sharp_n(\mathbb{R})$ exists for all $n$. 
\begin{enumerate}
\item If $n$ is even, then $\Sigma_1^{M_n(\mathbb{R})} = \Game^\mathbb{R}\Pi^1_{n+1}$.
\item If $n$ is odd, then $\Sigma_1^{M_n(\mathbb{R})} = \Game^\mathbb{R}\Sigma^1_{n+1}$.
\end{enumerate}
\end{theorem}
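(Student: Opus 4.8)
The plan is to prove the two inclusions of the equality separately, and to unify the two cases by writing $\Gamma_n$ for $\Pi^1_{n+1}$ when $n$ is even and for $\Sigma^1_{n+1}$ when $n$ is odd. The key observation is that $\Gamma_n$ is precisely the scaled projective pointclass at level $n+1$, i.e.\ the entry $\Pi^1_{n+1}$ or $\Sigma^1_{n+1}$ of the sequence $\Pi^1_1,\Sigma^1_2,\Pi^1_3,\Sigma^1_4,\dots$ of pointclasses carrying the scale property under projective determinacy. It is exactly the periodicity of this sequence that will produce the alternation between $\Pi$ and $\Sigma$ in the statement, so I would organize the argument as an induction on $n$ whose base case $n=0$ is the Martin--Steel theorem $\Sigma_1^{L(\mathbb{R})}=\Game^\mathbb{R}\Pi^1_1$ of \cite{MaSt08}, and whose inductive step reflects the passage from $M_{n-1}(\mathbb{R})$ to $M_n(\mathbb{R})$ --- that is, the addition of one Woodin cardinal --- as one further application of the game quantifier $\Game^\mathbb{R}$, advancing the projective index by one and flipping parity.

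The core of the proof is a correctness theorem: for a real game $G_x$ with $\Gamma_n$ payoff, player I has a winning strategy in $V$ if and only if $M_n(\mathbb{R})\models$ ``player I has a winning strategy in $G_x$.'' For the forward direction I would use the $n$ Woodin cardinals of $M_n(\mathbb{R})$ to obtain, internally, determinacy of $\Gamma_n$-payoff real games; this is the level-by-level, internalized form of the equivalence recorded in the abstract between the existence of $M^\sharp_n(\mathbb{R})$ and determinacy of projective games on $\mathbb{R}$, proved by the methods of Woodin and Neeman \cite{Ne04}. Internal determinacy is essential here, since a winning strategy for I coming from $V$ need not itself lie in the model: if I failed to win inside $M_n(\mathbb{R})$, then II would win there, and, because $M_n(\mathbb{R})$ contains all reals and the projective payoff is absolute, a strategy for II computed inside the model would witness a loss for I in $V$, a contradiction. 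The same closure and absoluteness give the backward direction directly.

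Granting correctness, the inclusion $\Game^\mathbb{R}\Gamma_n\subseteq\Sigma_1^{M_n(\mathbb{R})}$ follows by reflecting the existence of a winning strategy to a level of the model: $x\in A$ holds iff $M_n(\mathbb{R})\models\exists\sigma\,(\sigma\text{ wins }G_x)$, and since the payoff is projective this witness is captured at some proper initial segment $M_n(\mathbb{R})\,|\,\alpha$ computing the payoff correctly, so the statement is $\Sigma_1$. The reflection to a bounded level is where the scale property of $\Sigma_1^{M_n(\mathbb{R})}$ enters, and I would establish it through a fine-structural scale analysis of $\mathbb{R}$-premice carrying $n$ Woodin cardinals, the common generalization of the scale analysis of $L(\mathbb{R})$ and of the $M_n$ hierarchy of \cite{St95}. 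For the reverse inclusion $\Sigma_1^{M_n(\mathbb{R})}\subseteq\Game^\mathbb{R}\Gamma_n$, a $\Sigma_1$ formula asserts $\exists\alpha\,M_n(\mathbb{R})\,|\,\alpha\models\varphi(x)$, and, using the scale on the $\Sigma_1$ pointclass together with Moschovakis's third periodicity theorem, I would convert this existential assertion into a real game whose payoff has complexity exactly $\Gamma_n$; the parity of $\Gamma_n$ is then forced by the parity of the scaled projective pointclass reached after $n$ applications of the quantifier.

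I expect the main obstacle to be the correctness theorem with the parity made exact --- that is, showing that the $n$ Woodin cardinals of $M_n(\mathbb{R})$ capture the pointclass $\Game^\mathbb{R}\Gamma_n$ precisely, neither more nor less, with $\Pi^1_{n+1}$ arising for even $n$ and $\Sigma^1_{n+1}$ for odd $n$. The lower bound (that the model decides at least this much) rests on the internalized determinacy transfer and on genericity iterations making the relevant reals generic over initial segments of the model; the upper bound (that its $\Sigma_1$ theory is captured by games of complexity no greater than $\Gamma_n$) rests on comparison of $\mathbb{R}$-premice and on the fine-structural scale analysis, which is the most technically demanding ingredient. Tracking the parity through the periodicity theorems, so that an even number of Woodin cardinals lands on a $\Pi$ pointclass and an odd number on a $\Sigma$ pointclass, is the delicate bookkeeping that separates the two cases and mirrors the parity phenomenon already visible in Steel's characterization of $\Sigma_1^{M_n}$.
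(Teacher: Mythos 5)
Your plan hinges on a correctness theorem in the form ``Player I wins $G_x$ in $V$ if and only if $M_n(\mathbb{R})\models$ Player I wins $G_x$,'' proved by getting determinacy of $\Gamma_n$ real games \emph{internally} from the $n$ Woodin cardinals of $M_n(\mathbb{R})$. This is precisely the statement $(\Game^\mathbb{R}\Gamma_n)^{M_n(\mathbb{R})}=\Game^\mathbb{R}\Gamma_n$, and the paper explicitly proves it only under the \emph{additional} hypothesis that $M_n^\sharp(\mathbb{R})\models\ad$, remarking that no proof is known without it. The obstruction is that $M_n(\mathbb{R})$, like $L(\mathbb{R})$, need not satisfy $\mathsf{AC}$: the Martin--Steel/Neeman determinacy arguments for games whose \emph{moves are reals} need to select among continuum-many candidate moves, which inside the model requires uniformization of $\Game^\mathbb{R}\Gamma_n$ sets, hence the scale property for $\Sigma_1^{M_n(\mathbb{R})}$, hence $\ad$ in the model (Steel's scale analysis). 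The same issue undermines your use of scales on $\Sigma_1^{M_n(\mathbb{R})}$ and Third Periodicity for real games in both inclusions. The paper's actual argument avoids internal determinacy of real games entirely: it replaces ``$M_n(\mathbb{R})\models$ I wins $G$'' by ``$M_n(\mathbb{R})\models\Vdash_{\coll(\omega,\mathbb{R})}$ I wins $G_{\mathbb{R}^V}$,'' so that after collapsing $\mathbb{R}$ the restricted game becomes a game on $\mathbb{N}$, where the known projective correctness of $M_n(A)[g]$ (for countable $A$, via countable elementary substructures of $V_\kappa$) does the work. This collapse-forcing reformulation is the missing idea on the right-to-left inclusion.

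For the inclusion $\Sigma_1^{M_n(\mathbb{R})}\subseteq\Game^\mathbb{R}\Gamma_n$ the paper does not use scales or periodicity at all. It uses ``model games'' in which Player I plays a complete $\lpm$-theory together with reals, the payoff condition demanding that the definable closure of the played reals be a sound, $\Pi^1_{n+1}$-iterable premouse witnessing $\varphi$; the complexity $\Pi^1_{n+1}$ of the payoff comes from the $\Pi^1_{n+1}$-iterability clause, and the $\mathbb{R}$-premouse is assembled from a winning strategy as a direct limit of countable $a$-premice (Martin--Steel style). The even/odd alternation arises because for even $n$ the witness is an $n$-small premouse produced by Player I (so the set is $\Game^\mathbb{R}\Pi^1_{n+1}$), whereas for odd $n$ Player I is required to produce a non-$n$-small premouse and it is Player II whose win certifies the witness (so the set is $\Game^\mathbb{R}\Sigma^1_{n+1}$); this rests on which sharps are $\Pi^1_{k+1}$-iterable, not on the periodicity of the scaled pointclasses. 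Your proposed induction on $n$, with each new Woodin cardinal corresponding to one more application of $\Game^\mathbb{R}$, is not carried out in the paper and is not supported by any lemma you supply. As it stands, the proposal does not constitute a proof.
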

Theorem \ref{theoremPointclass} relativizes to real parameters, as do all other results stated in their lightface forms below.

The proof of Theorem \ref{theoremPointclass} has two parts: first, the inclusion from right to left. To the best of our knowledge, the argument here is new; it is similar for even and odd $n$. In particular, it provides a new proof for the result in the case $n = 0$, although it makes use of an additional determinacy hypothesis. The second part is the inclusion from left to right. Here, however, we need slightly different arguments for even and odd $n$. The one for even $n$ is similar to the case $n=0$, but makes use of the games from \cite{AgMu}. The argument for odd $n$ requires a variation of these games. This is all done in Section \ref{SectSigma1} under an additional determinacy assumption.

In Section \ref{SectApplications}, we present applications of the characterization and its proof.
First, we prove an analogue of the Neeman-Woodin theorem for games on $\mathbb{R}$ which, combined with the results from the preceding section yields Theorem \ref{theoremPointclass}:
\begin{theorem}\label{theoremGamesOnR}
The following are equivalent:
\begin{enumerate}
\item Projective determinacy for games on $\mathbb{R}$;
\item $M_n^\sharp(\mathbb{R})$ exists for all $n\in\mathbb{N}$.
\end{enumerate}
\end{theorem}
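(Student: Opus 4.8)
The plan is to prove the two implications separately and, following the pattern of the Neeman--Woodin theorem, to organize both around the level-by-level equivalences that constitute them. Write $\mathrm{Det}_n^{\mathbb{R}}$ for the assertion that every game on $\mathbb{R}$ of length $\omega$ whose payoff is $\boldsymbol\Pi^1_{n+1}$ (for even $n$) or $\boldsymbol\Sigma^1_{n+1}$ (for odd $n$) in the sequence of reals played is determined. Since projective determinacy for games on $\mathbb{R}$ is exactly $\forall n\,\mathrm{Det}_n^{\mathbb{R}}$, and since lower levels of determinacy follow from higher ones, it suffices to establish $\mathrm{Det}_n^{\mathbb{R}}$ if and only if $M_n^\sharp(\mathbb{R})$ exists, for each $n$. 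I would prove this by induction on $n$, interleaving the two directions.

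For the direction from mice to determinacy, observe first that a game on $\mathbb{R}$ of length $\omega$ is, after coding each real move as $\omega$ integer moves, a game on $\mathbb{N}$ of length $\omega^2$, that is, a long game in the sense of Neeman \cite{Ne04}. The approach is therefore to lift Neeman's direct construction of winning strategies in long games from iterable models with Woodin cardinals to the setting relativized over $\mathbb{R}$. Since $M_n^\sharp(\mathbb{R})$ is an $\mathbb{R}$-premouse carrying $n$ Woodin cardinals above $\mathbb{R}$, its $\omega_1$-iteration strategy is what drives the construction: given a projective payoff set, one unfolds it into the auxiliary real-game over $M_n^\sharp(\mathbb{R})$ built in Section \ref{SectSigma1}, where the genericity iterations now absorb reals rather than integers, and one checks that following the iteration strategy yields a strategy winning the original game. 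No determinacy is assumed at this step, only iterability; this is the analogue of the Woodin--Neeman argument, and I expect it to go through by a careful but essentially routine adaptation.

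For the converse, from determinacy to the existence of the mouse, I would run a core model induction over $\mathbb{R}$, adapting the argument of \cite{MSW} to the $\mathbb{R}$-setting. Assuming $\mathrm{Det}_n^{\mathbb{R}}$ and, inductively, that $M_{n-1}^\sharp(\mathbb{R})$ exists, the determinacy first yields, by periodicity arguments for the real-game quantifier, that $\Game^{\mathbb{R}}\boldsymbol\Pi^1_{n+1}$ (resp.\ $\Game^{\mathbb{R}}\boldsymbol\Sigma^1_{n+1}$) is an adequate pointclass with the scale property. One then carries out a $K^c$-style construction relative to $\mathbb{R}$: either it reaches a level with $n$ Woodin cardinals, whence a minimality and iterability analysis identifies that level as $M_n^\sharp(\mathbb{R})$, or the construction stabilizes below $n$ Woodins, in which case the resulting core model $K(\mathbb{R})$ is correct enough that its $\Sigma_1$-theory cannot capture all of the $\Game^{\mathbb{R}}$-definable sets, contradicting the scale and closure properties granted by $\mathrm{Det}_n^{\mathbb{R}}$.

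The main obstacle is this last, core-model-theoretic direction. The delicate points are the scale analysis and closure properties of the pointclasses $\Game^{\mathbb{R}}\boldsymbol\Pi^1_{n+1}$ and $\Game^{\mathbb{R}}\boldsymbol\Sigma^1_{n+1}$, where the even/odd periodicity must be tracked with care, and the construction and iterability of $\mathbb{R}$-premice via background certificates whose critical points lie above $\mathbb{R}$. In particular, the genericity iterations needed to make arbitrary reals generic over candidate mice, and the verification that the $K^c(\mathbb{R})$ construction does not break down before reaching $n$ Woodins, are where the bulk of the work lies; it is precisely the passage from integer moves to real moves that forces the use of the variant games of Section \ref{SectSigma1} rather than the Neeman--Woodin machinery verbatim.
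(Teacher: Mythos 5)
Your plan has three genuine problems. First, you reduce the theorem to the level-by-level equivalence $\mathrm{Det}_n^{\mathbb{R}}\Leftrightarrow$ ``$M_n^\sharp(\mathbb{R})$ exists,'' but that equivalence is precisely Conjecture \ref{ConjectureMnR}, which the paper leaves open. The actual proof lives with an off-by-one: $\PI^1_{n+2}$-determinacy for games on $\mathbb{R}$ yields $M_n^\sharp(\mathbb{R})$ (Lemma \ref{LemmaMnExists}), while the existence of $M_{n+1}^\sharp(\mathbb{R})$ yields $\PI^1_{n+1}$-determinacy on $\mathbb{R}$. This mismatch is harmless for the ``for all $n$'' statement but fatal for your induction as stated, and you give no argument that would close it. Second, your route from mice to determinacy through Neeman's long-games machinery, viewing a length-$\omega$ game on $\mathbb{R}$ as a length-$\omega^2$ game on $\mathbb{N}$, heads toward a hypothesis you do not have: by Theorem \ref{theoremLongGames}, projective determinacy for length-$\omega^2$ games on $\mathbb{N}$ is equivalent to $M_n^\sharp(\mathbb{R})$ existing \emph{and satisfying} $\ad$, so the long-game apparatus is the wrong tool here. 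The paper's argument is much softer: Neeman's length-$\omega$ determinacy proof from $n$ Woodin cardinals and a measurable above is already stated for games with moves in an arbitrary set $X$; one runs it inside $M_{n+1}^\sharp(\mathbb{R})$ with $X=\mathbb{R}$, uses Corollary \ref{CorollaryTopologies} to pass between the two notions of ``projective'' on $(\omega^\omega)^\omega$, and notes that winning strategies are absolute to $V$ because the model contains all reals. The auxiliary games of Section \ref{SectSigma1} play no role in this direction, and genericity iterations are not needed.

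Third, and most seriously, the converse direction is exactly where you place ``the bulk of the work'' --- a $K^c(\mathbb{R})$ construction with background certificates above $\mathbb{R}$ and a scale analysis of $\Game^{\mathbb{R}}\PI^1_{n+1}$ --- and none of it is carried out; core model theory over $\mathbb{R}$ at this level is not off-the-shelf. The paper avoids it entirely. Determinacy of projective games on $\mathbb{R}$ implies determinacy of projective games on $\mathbb{N}$, hence by the known Neeman--Woodin theorem for the countable case, $M_{n+1}^\sharp(A)$ exists for every $A\in\mathcal{P}_{\omega_1}(\mathbb{R})$. One then shows Player II cannot win the model game $G^{\mathrm{even}}_\varphi$ of Section \ref{SectSigma1} (applied at level $n+1$, with $\varphi$ asserting an active initial segment with $n$ Woodin cardinals) by defeating any purported strategy using these countable mice; determinacy hands Player I a winning strategy, and Lemma \ref{LemmaGphiLP} converts that strategy into a countably iterable $\mathbb{R}$-premouse via a Martin--Steel-style direct limit of the countable models $N_p$ over a closed cofinal subset of $\mathcal{P}_{\omega_1}(\mathbb{R})$. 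That gluing construction, not a new core model induction over $\mathbb{R}$, is the missing idea in your proposal.
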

We suspect that, as is the case for games on $\mathbb{N}$, the result can be improved to yield equivalences at each level of the projective hierarchy:
\begin{conjecture}\label{ConjectureMnR}
Let $n\in\mathbb{N}$. The following are equivalent:
\begin{enumerate}
\item $\PI^1_{n+1}$-determinacy for games on $\mathbb{R}$;
\item $M_n^\sharp(\mathbb{R})$ exists.
\end{enumerate}
\end{conjecture}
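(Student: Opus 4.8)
The plan is to prove the two implications separately, in each case localizing the corresponding direction of Theorem \ref{theoremGamesOnR} to the single level $n$. The backbone is the pointclass computation of Theorem \ref{theoremPointclass}: since $\Sigma_1^{M_n(\bR)}$ equals $\Game^\bR\PI^1_{n+1}$ for even $n$ and $\Game^\bR\SIGMA^1_{n+1}$ for odd $n$, the model $M_n(\bR)$ already ``sees'' exactly the games on $\bR$ whose payoff sits at the $(n+1)$-st projective level, and one expects the existence of its sharp to be the precise large-cardinal content of determinacy at that level. Recall also that determinacy of $\PI^1_{n+1}$ games on $\bR$ and of $\SIGMA^1_{n+1}$ games on $\bR$ coincide (pass to the dual game), so the parity split above will enter only through the scale analysis, not through the determinacy hypothesis itself.

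For $(2)\Rightarrow(1)$, I would adapt Neeman's derivation of projective determinacy from iterable mice \cite{Ne04} to the setting of moves in $\bR$. Fix a $\PI^1_{n+1}$ payoff $A \subseteq {}^{\omega}\bR$. Using the unique $\omega_1$-iteration strategy for $M_n^\sharp(\bR)$, one sets up an auxiliary game in which, alongside the run $\langle x_i : i\in\omega\rangle \in {}^{\omega}\bR$ of the main game, the players cooperatively build an iteration tree on $M_n^\sharp(\bR)$ together with a $\Sigma_1$-certificate that $\langle x_i\rangle \in A$, respectively that $\langle x_i\rangle \notin A$. The winning condition of the auxiliary game is reduced, via the $n$ Woodin cardinals and the iterability of the mouse, to a condition of much lower complexity (closed, or at worst finite-level in the codes), hence determined outright; one then transfers a winning strategy back to the main game, using iterability to guarantee that the certificates produced are honest. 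The periodicity of Theorem \ref{theoremPointclass} dictates which player is responsible for the positive certificate and carries the even/odd bookkeeping.

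For $(1)\Rightarrow(2)$, I would run a core model induction driven by the determinacy hypothesis, in the spirit of the converse direction of the Neeman--Woodin theorem recorded in \cite{MSW}. Building on the lower levels supplied by the induction (with the Martin--Steel base case $n=0$), $\PI^1_{n+1}$-determinacy for games on $\bR$ should provide enough correctness and enough scales to push the construction through the $n$-th Woodin: one uses the determinacy to establish the relevant mouse-capturing and to verify that the $\Game^\bR$-closure of the projective pointclass has the scale property, then extracts the extender model with $n$ Woodins over $\bR$ and closes it off under the sharp operation. The determinacy assumption is invoked precisely at the gap corresponding to $\Sigma_1^{M_n(\bR)}$, where the core model induction would otherwise stall.

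The main obstacle is the localization itself. Theorem \ref{theoremGamesOnR} is proved by running all levels together, and both the determinacy transfer and the core model induction become delicate once one insists on matching $\PI^1_{n+1}$ exactly to $n$ Woodins: one must check that no determinacy strictly beyond level $n+1$ is silently invoked on the $(2)\Rightarrow(1)$ side, that the auxiliary games genuinely certify $\PI^1_{n+1}$ and not $\PI^1_{n+2}$ facts, and that the induction on the $(1)\Rightarrow(2)$ side terminates at the correct stage rather than overshooting. This demands a level-by-level fine-structural analysis of the relativized mouse operators $x\mapsto M_k^\sharp(x)$ with moves in $\bR$, together with a sharp reading of the scale analysis underlying Theorem \ref{theoremPointclass}; it is for this reason that we record the statement as a conjecture rather than a theorem.
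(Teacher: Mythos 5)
You should first be clear that the paper does not prove this statement: it is recorded as an open conjecture, with only the instance $n=0$ known (by theorems of Martin and Trang), and what the paper actually establishes is the non-local equivalence of Theorem \ref{theoremGamesOnR}, in which each direction loses exactly one level. Your text is likewise not a proof but a programme, and you concede as much in your final paragraph; so the verdict is that there is a genuine gap, and it sits exactly where you flag it, namely the localization. Concretely, for $(2)\Rightarrow(1)$ the paper's own argument (the unlabelled lemma following Lemma \ref{LemmaMnExists}) obtains $\PI^1_{n+1}$-determinacy for games on $\bR$ by working \emph{inside} $M_{n+1}^\sharp(\bR)$, where there genuinely are $n$ Woodin cardinals with a measurable above them, and then transferring strategies up to $V$ because the model contains all plays. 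If you only have $M_n^\sharp(\bR)$, the class model $M_n(\bR)$ has $n$ Woodin cardinals but no measurable above them on its extender sequence, and the structure $M_n^\sharp(\bR)$ is uncountable and only \emph{countably} iterable in the paper's sense; so neither the ``work inside the model'' route nor a direct Neeman-style auxiliary-game/genericity-iteration argument on the full $\bR$-premouse is available off the shelf. Your sketch does not say how to get by with one fewer Woodin cardinal, and that is the whole difficulty.

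For $(1)\Rightarrow(2)$ the paper's construction of $M_n^\sharp(\bR)$ (Lemma \ref{LemmaMnExists}) goes through the model game $G^{\text{even}}_{\varphi}$ at level $n+1$, whose payoff for Player I is $\PI^1_{n+2}$; this is precisely why it consumes $\PI^1_{n+2}$-determinacy for games on $\bR$ rather than $\PI^1_{n+1}$. You propose instead a core model induction, which is a reasonable alternative strategy in principle, but nothing is carried out: you do not specify the background construction over $\bR$, the mouse-capturing statement at the critical level, or how $\PI^1_{n+1}$-determinacy for games on $\bR$ yields iterability for the relevant $K^{c}$-type construction at the $n$-th Woodin. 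Moreover, your stated ``backbone,'' Theorem \ref{theoremPointclass} (and its local form, Theorem \ref{TheoremSigma1}), already \emph{assumes} that $M_n^\sharp(\bR)$ exists, so invoking it to drive $(1)\Rightarrow(2)$ is circular. As written, the proposal correctly diagnoses the obstruction but does not remove it; the statement remains, as in the paper, a conjecture.
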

We remark that the instance of Conjecture \ref{ConjectureMnR} when $n=0$ is true, by theorems of Martin \cite{Ma70} and Trang \cite{Tr13}.

We obtain a similar equivalence that results from augmenting the existence of $M_n(\mathbb{R})$ with the assertion that it satisfies the Axiom of Determinacy: \begin{theorem}\label{theoremLongGames}
The following are equivalent:
\begin{enumerate}
\item Projective determinacy for games on $\mathbb{N}$ of length $\omega^2$;
\item $M_n^\sharp(\mathbb{R})$ exists and satisfies $\ad$ for all $n\in\mathbb{N}$.
\end{enumerate}
\end{theorem}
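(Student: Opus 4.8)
The plan is to prove the equivalence by factoring condition (1) through \emph{whole-real} determinacy together with the internal Axiom of Determinacy of the models $M_n(\mathbb{R})$, and then to invoke Theorem~\ref{theoremGamesOnR} to convert the existence of the mice into projective determinacy for games on $\mathbb{R}$. The basic dictionary is the familiar one: grouping the $\omega^2$ integer moves of a long game into $\omega$ consecutive blocks of length $\omega$ presents it as a length-$\omega$ game in which the two players build one real $y_k$ per block. If the original payoff is projective as a subset of $\mathbb{N}^{\omega^2}\cong(\mathbb{R})^\omega$, then so is the payoff of the resulting real game. The point to keep in mind throughout is that this is \emph{not} the same as a game in which the players alternate \emph{whole} reals (as in Theorem~\ref{theoremGamesOnR}): in a length-$\omega^2$ game both players move cofinally in every block, so after a block a player has seen only the realized play $y_k$ and not the opponent's block strategy. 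This informational difference is exactly what the hypothesis $M_n(\mathbb{R})\models\ad$ will be used to bridge.

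For the direction (1)$\Rightarrow$(2), one half is soft. A length-$\omega$ whole-real game with projective payoff embeds into a length-$\omega^2$ integer game by assigning the even blocks to player I and the odd blocks to player II and letting the payoff ignore the moves of the non-controlling player in each block; this reduction preserves projective complexity, so (1) yields projective determinacy for games on $\mathbb{R}$, and hence, by Theorem~\ref{theoremGamesOnR}, the existence of $M_n^\sharp(\mathbb{R})$ for every $n$. The substantive half is to derive $M_n(\mathbb{R})\models\ad$. Here I would first use Theorem~\ref{theoremPointclass} to pin down the bottom level: for even $n$ the sets $\Sigma_1^{M_n(\mathbb{R})}=\Game^\mathbb{R}\Pi^1_{n+1}$, and for odd $n$ the sets $\Sigma_1^{M_n(\mathbb{R})}=\Game^\mathbb{R}\Sigma^1_{n+1}$, are precisely the payoffs decided by a single real game with projective payoff. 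A length-$\omega$ integer game whose payoff is such a set unfolds into a length-$\omega+\omega^2=\omega^2$ game with projective payoff, so the determinacy of games with $\Sigma_1^{M_n(\mathbb{R})}$ (and dually $\Pi_1^{M_n(\mathbb{R})}$) payoff is a consequence of (1). The remaining work is to propagate determinacy from this bottom level through every level of the hierarchy of $M_n(\mathbb{R})$, i.e.\ to run a reflection argument internal to $M_n(\mathbb{R})$ of the same type that takes $\Sigma_1^{L(\mathbb{R})}$-determinacy to $\ad^{L(\mathbb{R})}$, now in the presence of the $n$ Woodin cardinals of $M_n(\mathbb{R})$. The extra interactivity of the long game — both players moving in every block — is what lets the single projective long game simulate the iterated applications of the game quantifier needed to reach arbitrarily high levels of $M_n(\mathbb{R})$.

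For the converse (2)$\Rightarrow$(1), I would fix a length-$\omega^2$ game $\mathcal{G}$ with projective payoff and choose $n$ large enough that the payoff is computed below the relevant projective level, so that $\mathcal{G}$ can be analyzed inside $M_n(\mathbb{R})$. Working in $M_n(\mathbb{R})$, where $\ad$ holds by hypothesis, I would peel off the blocks one at a time: after fixing the plays in the first $k$ blocks, the tail of $\mathcal{G}$ is again a long game whose associated payoff is a set of reals lying in $M_n(\mathbb{R})$, and $\ad^{M_n(\mathbb{R})}$ furnishes a winning strategy for the corresponding length-$\omega$ block game. Assembling these block strategies into a single strategy for $\mathcal{G}$ is an unraveling through the $\omega$ blocks; the reason full $\ad$ (rather than determinacy at a single level) is needed is that the payoffs of the tail games climb the hierarchy of $M_n(\mathbb{R})$ as blocks are removed, so determinacy must be available at every level simultaneously and in a sufficiently uniform, definable form to be glued together.

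The main obstacle, in both directions, is precisely this block-by-block unraveling and the uniformity it demands. In the forward direction one must show that the determinacy of a \emph{single} projective long game reflects all the way up through $M_n(\mathbb{R})$, matching the boldface pointclass exactly to the full model; in the backward direction one must produce, from the internal determinacy of $M_n(\mathbb{R})$, a \emph{single} $V$-strategy for the whole length-$\omega^2$ game by amalgamating the strategies for the tail games, which requires controlling how the complexity of the tail payoffs grows with the block index and verifying that the amalgamation respects the alternation and the information structure of the long game. Getting the parity bookkeeping of Theorem~\ref{theoremPointclass} to line up correctly across all $n$ — so that ``projective'' on the game side corresponds to ``all of $M_n(\mathbb{R})$'' on the model side for every $n$ — is the delicate part of the argument, with the case $n=0$ recovering the expected relationship between long-game determinacy and $\ad^{L(\mathbb{R})}$.
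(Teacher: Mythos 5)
Your high-level architecture (block coding, reduction of length-$\omega$ real games to length-$\omega^2$ integer games, and hence the existence of all $M_n^\sharp(\mathbb{R})$ via Theorem~\ref{theoremGamesOnR}) matches the paper, but both substantive halves have genuine gaps. For $(1)\Rightarrow(2)$, the step from determinacy of $\Sigma_1^{M_n(\mathbb{R})}$ games to $M_n(\mathbb{R})\models\ad$ is asserted as ``a reflection argument internal to $M_n(\mathbb{R})$ of the same type that takes $\Sigma_1^{L(\mathbb{R})}$-determinacy to $\ad^{L(\mathbb{R})}$,'' but this is not a routine reflection. The naive version needs a basis theorem to turn the $\Sigma_1$-over-$M_n(\mathbb{R})$ failure of $\ad$ into a non-determined set in the pointclass $\Sigma_1^{M_n(\mathbb{R})}$, and Solovay's basis theorem is proved from the scale property of that pointclass, which in turn requires $\ad$ in the model --- exactly what you are trying to establish. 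The genuine $L(\mathbb{R})$ transfer theorem (Kechris--Woodin) rests on the full scales-in-$L(\mathbb{R})$ analysis, and its analogue for $M_n(\mathbb{R})$ is a substantial theorem you would have to prove, not a step you can wave at. The paper avoids this entirely: it takes countable elementary substructures to produce a club of $A\in\mathcal{P}_{\omega_1}(\mathbb{R})$ with $M_n^\sharp(A)\not\models\ad$ and contradicts \cite[Theorem 3.1]{AgMu}, which extracts $\ad$ in $M_n^\sharp(A)$ for countable $A$ directly from length-$\omega^2$ determinacy via model games, with no internal determinacy transfer.

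For $(2)\Rightarrow(1)$, your block-by-block unraveling is the right shape, but the assembly of tail strategies is precisely where the content lies and you do not supply the mechanism. What is actually needed (and what the paper imports from \cite[Lemma 3.11, Theorem 3.12]{Ag18c}) is: (a) correctness of $M_n(\mathbb{R})$ about which player wins the real games arising as tails (Theorem~\ref{TheoremCorrectnessGames}, itself requiring $\ad^{M_n(\mathbb{R})}$ for the third-periodicity/uniformization step); (b) the uniformization property for $\Sigma_1^{M_n(\mathbb{R})}$, obtained from the scale property under $\ad^{M_n(\mathbb{R})}$, to select tail strategies definably and uniformly; and (c) Solovay's basis theorem for $M_n(\mathbb{R})$. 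Your diagnosis that the difficulty is ``controlling how the complexity of the tail payoffs grows with the block index'' is also off target: deleting finitely many blocks from a length-$\omega^2$ game leaves a game of the same length and pointclass, so the tails do not climb the hierarchy; the role of $\ad$ in the model is to furnish scales and uniformization for the single pointclass $\Sigma_1^{M_n(\mathbb{R})}$, not to cover a tower of increasingly complex payoffs.
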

We remark that Conjecture \ref{ConjectureMnR} implies its counterpart for Theorem \ref{theoremLongGames} (this is by our proof of Theorem \ref{theoremLongGames}).

\section{Preliminaries}
\label{SectPreliminaries}
We study infinite, two-player perfect-information games of the form
\[ \begin{array}{c|ccccc} \mathrm{I} & x_0 & & x_2 & &\hdots \\ \hline
    \mathrm{II} & & x_1 & & x_3 & \hdots 
   \end{array} \]
   where each move $x_i$ is an element of  $\omega^\omega$---a \emph{real number}.
In the game above, the payoff set is given by a subset of $(\omega^\omega)^{\omega}$, i.e., a collection of sequences of reals of length $\omega$. Observe that, making use of a fixed (e.g., recursive) bijection between $\omega$ and $\omega\times\omega$, one can code elements of $(\omega^\omega)^\omega$ by elements of $\omega^\omega$, and thus each subset of $(\omega^\omega)^{\omega}$ can be coded by a subset of $\omega^\omega$. If $x\in\omega^\omega$, we denote by $x_{(i)}$ the $i$th real coded in this way. 

If $A\subset \omega^\omega\times\omega^\omega$, we denote by $\Game^\mathbb{R}A$ the set of all $x\in\omega^\omega$ such that Player I has a winning strategy in the game with payoff 
\[\{(y_{(0)}, y_{(1)}, \hdots): (x,y) \in A\}.\]
Similarly for other spaces.
If $\Gamma$ is a pointclass, we denote by $\Game^\mathbb{R}\Gamma$ the pointclass of all $\Game^\mathbb{R}A$ such that $A\in \Gamma$.

\subsection{Topological Remarks} 
We are interested in subsets of $(\omega^\omega)^{\omega}$ whose codes are definable over $\omega^\omega$ with parameters---the projective sets. These are the sets that can be obtained from open subsets of $\omega^\omega$ (in the product topology) by applying finitely many projections and complements.
These are not the same sets which are projective when $(\omega^\omega)^\omega$ is regarded as a product of infinitely many discrete copies of $\omega^\omega$, although  proving the determinacy of these sets does not require greater consistency strength, as we shall see later. To see that it does not require less consistency strength,
we need the following simple observation:

\begin{lemma}\label{LemmaTopologies}
Let $\mathcal{T}$, $\mathcal{S}$ be topologies on a set $X$ and let $\mathcal{T}^\omega$ and $\mathcal{S}^\omega$ be the product topologies.
Suppose that $\mathcal{T}\subseteq\mathcal{S}$; then $\mathcal{T}^\omega\subseteq\mathcal{S}^\omega$.
\end{lemma}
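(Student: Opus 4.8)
The plan is to unwind the definition of the product topology directly. Recall that the product topology $\mathcal{T}^\omega$ on $X^\omega$ is the topology generated by the subbasis consisting of all sets of the form $\pi_i^{-1}(U)$, where $\pi_i : X^\omega \to X$ is the $i$th projection and $U \in \mathcal{T}$; equivalently, a basis for $\mathcal{T}^\omega$ is given by the \emph{cylinders}, i.e., products $\prod_{i\in\omega} U_i$ where each $U_i \in \mathcal{T}$ and $U_i = X$ for all but finitely many $i$. I would take the latter description as the working definition, since it makes the comparison transparent.

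The key observation is that the hypothesis $\mathcal{T} \subseteq \mathcal{S}$ is inherited coordinatewise. First I would fix an arbitrary basic open set $B = \prod_{i\in\omega} U_i \in \mathcal{T}^\omega$, so each $U_i \in \mathcal{T}$ and all but finitely many $U_i$ equal $X$. Since $\mathcal{T} \subseteq \mathcal{S}$, each $U_i$ is also $\mathcal{S}$-open; moreover $X \in \mathcal{S}$ automatically (as $X$ is open in any topology on $X$), so the ``cofinitely many coordinates equal $X$'' condition is preserved. Hence $B = \prod_{i\in\omega} U_i$ is a basic open set of $\mathcal{S}^\omega$ as well, and in particular $B \in \mathcal{S}^\omega$. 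This shows that every basic open set of $\mathcal{T}^\omega$ lies in $\mathcal{S}^\omega$.

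To finish, I would pass from basic open sets to arbitrary open sets. Let $W \in \mathcal{T}^\omega$ be arbitrary. By definition of the product topology, $W$ is a union of basic open sets, say $W = \bigcup_{j} B_j$ with each $B_j$ a $\mathcal{T}^\omega$-basic cylinder. By the previous paragraph, each $B_j \in \mathcal{S}^\omega$, and since $\mathcal{S}^\omega$ is closed under arbitrary unions (being a topology), we conclude $W = \bigcup_j B_j \in \mathcal{S}^\omega$. As $W$ was arbitrary, $\mathcal{T}^\omega \subseteq \mathcal{S}^\omega$, as desired.

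There is no real obstacle here; the statement is essentially formal and the only thing to be careful about is using the correct notion of product topology, namely the one with the finite-support (cylinder) basis, rather than the box topology, for which the coordinatewise argument would still work but is not what is intended. The subbasis formulation gives an even slicker one-line argument if preferred: the subbasis $\{\pi_i^{-1}(U) : i\in\omega,\ U \in \mathcal{T}\}$ generating $\mathcal{T}^\omega$ is contained in the analogous subbasis generating $\mathcal{S}^\omega$, and a topology generated by a smaller subbasis is contained in the one generated by a larger subbasis.
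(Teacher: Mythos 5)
Your proposal is correct and follows essentially the same route as the paper: both reduce to checking that the finite-support cylinder basic open sets of $\mathcal{T}^\omega$ are $\mathcal{S}^\omega$-open, using that each coordinate factor is $\mathcal{T}$-open and hence $\mathcal{S}$-open. The only difference is that you spell out the passage from basic open sets to arbitrary unions, which the paper leaves implicit.
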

\proof
It suffices to verify that basic open sets of $\mathcal{T}^\omega$ are open in $\mathcal{S}^\omega$. These sets are of the form
\[U = \{(x_0,x_1,\hdots) \in X^\omega: x_i \in U_i \text{ for } i\leq n\},\]
where $n\in\mathbb{N}$ and each of $U_0,\hdots, U_n$ is $\mathcal{T}$-open, thus $\mathcal{S}$-open, so $U$ is also $\mathcal{S}^\omega$-open.
\endproof

\begin{remark}\label{RemarkProjective}
Let $X$ be a discrete space. Thus, a set $A\subset X^\omega$ is closed if, and only if, it is the set of branches through some tree $T$ on $X$. We may define a subset of $X$ to be $\SIGMA^1_1$ if it is the projection of a tree on $X\times\omega$, and go on to define the projective hierarchy as usual. In particular, we might focus on the case $X = \omega^\omega$. By Lemma \ref{LemmaTopologies}, every set which is ``projective'' in this sense is also projective in the usual sense. We state this as a corollary:
\end{remark}

\begin{corollary}\label{CorollaryTopologies}
Let $X$ be the space $(\omega^\omega)^\omega$, viewed as a product of $\omega^\omega$ with the usual topology (so $X$ is homeomorphic to $\omega^\omega$); and let $Y$ be the space $(\omega^\omega)^\omega$ viewed as a product of discrete spaces. Then, every open (Borel, analytic, projective) set in $X$ is also open (Borel, analytic, projective) in $Y$.
\end{corollary}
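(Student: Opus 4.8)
The plan is to reduce Corollary~\ref{CorollaryTopologies} to Lemma~\ref{LemmaTopologies} by checking that the hypothesis of the lemma applies to the two topologies in question. Let $\mathcal{T}$ be the usual topology on $\omega^\omega$ (the product of countably many discrete copies of $\omega$, equivalently the Baire space topology) and let $\mathcal{S}$ be the discrete topology on $\omega^\omega$. Since every subset of a discrete space is open, we trivially have $\mathcal{T}\subseteq\mathcal{S}$. Lemma~\ref{LemmaTopologies} then gives $\mathcal{T}^\omega\subseteq\mathcal{S}^\omega$, which is exactly the assertion that every open set of $X$ is open in $Y$. This handles the open case, which is the base of the projective hierarchy.

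From there I would climb the hierarchy by induction, using that the pointclass operations defining the projective sets---complementation and projection along $\omega$---are insensitive to which of the two ambient topologies we use, because the inclusion $\mathcal{T}^\omega\subseteq\mathcal{S}^\omega$ is preserved under these operations. Concretely: if $A\subseteq X$ is closed, its complement is $\mathcal{T}^\omega$-open, hence $\mathcal{S}^\omega$-open by the base case, so $A$ is closed in $Y$; thus every closed (and hence, taking countable intersections of open sets and the like, every Borel) set in $X$ is Borel in $Y$. For the analytic case, an analytic set in $X$ is the projection of a closed subset of $X\times\omega^\omega$; by the same argument applied to the product, this closed set is closed in $Y\times\omega^\omega$, and projecting gives that the set is analytic in $Y$. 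Iterating projection and complementation through the finite levels $\SIGMA^1_n$, $\PI^1_n$ yields the full projective conclusion. The point is simply that being ``open/Borel/analytic/projective'' is a monotone property: enlarging the topology can only enlarge each of these pointclasses, so every such set with respect to the coarser topology $\mathcal{T}^\omega$ remains one with respect to the finer topology $\mathcal{S}^\omega$.

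I do not anticipate a genuine obstacle here; the result is essentially a formal consequence of Lemma~\ref{LemmaTopologies} together with the observation that $\mathcal{T}\subseteq\mathcal{S}$ when $\mathcal{S}$ is discrete. The only point requiring mild care is the homeomorphism type of $X$: one should note that the product of countably many copies of $\omega^\omega$ under the usual topology is again homeomorphic to $\omega^\omega$ (via the recursive pairing of coordinates described in the preliminaries), so that the ``usual sense'' of projective on $(\omega^\omega)^\omega$ genuinely coincides with the standard projective hierarchy on Baire space. This justifies the parenthetical remark in the statement and ensures that the two notions of ``projective'' being compared are the intended ones. With that noted, the corollary follows immediately, and I would present it as a one-line deduction from the lemma.
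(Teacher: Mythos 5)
Your proposal is correct and follows exactly the paper's route: the paper's proof is the one-line instruction to take $\mathcal{T}$ to be the usual topology and $\mathcal{S}$ the discrete topology and apply Lemma~\ref{LemmaTopologies}, leaving the propagation from open sets up through the Borel, analytic, and projective pointclasses implicit. Your additional remarks merely spell out that implicit step, and they are accurate.
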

\proof
Simply take $\mathcal{T}$ to be the usual topology on $\omega^\omega$ and $\mathcal{S}$ to be the discrete topology, and apply Lemma \ref{LemmaTopologies}.
\endproof

\subsection{Inner Model Theory} 
We work with canonical, fine
structural models with large cardinals, i.e., \emph{premice}. We
refer the reader to e.g., Steel \cite{St10} for an introduction, and to
Mitchell-Steel \cite{MS94}, Schindler-Steel-Zeman \cite{SchStZe02}, and Steel \cite{St08b} for additional
background. For basic set-theoretic definitions and results we refer to
Kanamori \cite{Ka08} and Moschovakis \cite{Mo09}. 

We will use Mitchell-Steel indexing for
extender sequences and the notation from Steel \cite{St10}. We will consider relativized premice constructed over
transitive sets of reals $X$ as in Steel \cite{St08b}. We denote by
$\lpm = \{\dot \in, \dot E, \dot F, \dot X\}$ the language of
relativized premice, where $\dot E$ is the predicate for the extender
sequence, $\dot F$ is the predicate for the top extender, and $\dot X$
is the predicate for the set over which we construct the premouse.

We say an \emph{$X$-premouse}
$M = (J_\alpha^{\vec{E}}, \in, \vec{E}, E_\alpha, X)$ for
$\vec{E} = (\dot E)^M$, $E_\alpha = (\dot F)^M$, and $X = (\dot X)^M$
is \emph{active} if $E_\alpha \neq \emptyset$. Otherwise, we say $M$
is \emph{passive}. We let
$M | \gamma = (J_\gamma^{\vec{E}}, \in, \vec{E} \upharpoonright
\gamma, E_\gamma, X)$ for $\gamma \leq M \cap \Ord$ and write $M || \gamma$ for the passive initial segment of $M$ of height
$\gamma$. In particular,
$M||\Ord^M$ denotes the premouse $N$ which agrees with $M$ except that
we let $(\dot F)^N = \emptyset$. 
% We say an ordinal $\eta$ is a\emph{(strong) cutpoint} of $M$ if there is no extender $E$ on the$M$-sequence with $\crit(E) \leq \eta \leq \lth(E)$. 
For an $X$-premouse, we denote by $\Sigma_1^M$ the pointclass of all sets  that are $\Sigma_1$-definable over $M$ in the language $\lpm$.
If $M$ is a premouse, we sometimes abuse notation by referring to other premice extending $M$ as ``$M$-premice.''

We will make use of the notions of \emph{$\alpha$-iterability}, where $\alpha$ is an ordinal, and its projective variants, $\Pi^1_n$-iterability. A definition of the latter can be found in Steel \cite{St95}. To make the notation uniform, we say a premouse is \emph{$\Pi^1_1$-iterable} if it is wellfounded. We will also make use of the fine structure of premice and the usual comparison theorem, which can be found in Steel \cite{St10}.
Let $A$ be a real number or a set of real numbers. 
We denote by $M_n^\sharp(A)$ the unique smallest sound, $\omega_1$-iterable $A$-premouse which is not $n$-small, if it exists. $M_n(A)$ denotes the proper-class model that results from iterating the top extender of $M_n(A)$ a proper-class amount of times. Let us recall the degrees of correctness of the models $M_n^\sharp(x)$:
Assuming $M_{2n}^\sharp(x)$ exists, it is $\SIGMA^1_{2n+2}$-correct. Similarly, $M^\sharp_{2n+1}(x)$ is $\SIGMA^1_{2n+2}$-correct, but can compute $\SIGMA^1_{2n+3}$-truth by forcing over its collapse algebra. (This is a result of Neeman. The result from $(\omega_1+1)$-iterability is due to Woodin; a proof can be found in \cite{MSW}.) Thus, every $\Sigma^1_{2n+3}(x)$ set of reals has a member recursive in $M^\sharp_{2n+1}(x)$, so every model closed under $x\mapsto M^\sharp_{2n+1}(x)$ is $\SIGMA^1_{2n+3}$-correct (this is due to Woodin; see Steel \cite{St95}). The arguments also apply to the models $M_n^\sharp(A)$, where $A$ is a countable set of real numbers, and show that it is $\SIGMA^1_{n+2}$-correct if $n$ is even, or $\SIGMA^1_{n+1}$-correct if $n$ is odd. Finally, they apply to generic extensions of these models by small partial orders, say, of size smaller than the least measurable cardinal. In particular, if $g$ is an $M_{2n}^\sharp(A)$-generic wellordering of $A$, then $M_{2n}^\sharp(A)[g]$ is $\SIGMA^1_{n+2}$-correct.

In our definition of $M_n^\sharp(A)$, we only demand $\omega_1$-iterability, rather than $(\omega_1+1)$-iterability. This is because often $\omega_1$-iterability is all one can hope to obtain as a consequence of determinacy in $V$. We refer the reader to \cite{MSW} for details on how to survive on a budget of strategies for countable trees. 

Since we will be interested in the case $X = \mathbb{R}$, we do not require $X$ to be countable. Here, we need a notion of iterability for $X$-premice that suffices to carry out all relevant comparison arguments: we say that an $X$-premouse is \emph{countably iterable} if all its countable elementary substructures are $\omega_1$-iterable.\footnote{This is non-standard, in the sense that one usually requires $(\omega_1+1)$-iterability here.} Similarly, we say that an $X$-premouse is \emph{countably $\Pi^1_n$-iterable} if all its countable elementary substructures are $\Pi^1_n$-iterable.

\section{The $\Sigma_1$ Subsets of $M_n(\mathbb{R})$}\label{SectSigma1}
In this section, we shall characterize the pointclass of all sets which are $\Sigma_1$-definable over $M_n(\mathbb{R})$, in the sense of the previous section.
The characterization depends on whether $n$ is even or odd. Our proof of the local version of the theorem requires determinacy for games on reals. 

\begin{theorem}\label{TheoremSigma1}
Let $n\in\mathbb{N}$. Suppose that $M_n^\sharp(\mathbb{R})$ exists and that $\Pi^1_{n+1}$ games on $\mathbb{R}$ are determined.
\begin{enumerate}
\item If $n$ is even, then $\Sigma_1^{M_n(\mathbb{R})} = \Game^\mathbb{R}\Pi^1_{n+1}$.
\item If $n$ is odd, then $\Sigma_1^{M_n(\mathbb{R})} = \Game^\mathbb{R}\Sigma^1_{n+1}$.
\end{enumerate}
\end{theorem}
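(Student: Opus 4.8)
The plan is to prove each inclusion separately, handling the two parities uniformly where possible. Write $\Gamma_{n+1}$ for the payoff class at issue, so $\Gamma_{n+1}=\Pi^1_{n+1}$ when $n$ is even and $\Gamma_{n+1}=\Sigma^1_{n+1}$ when $n$ is odd. Two features of this choice drive everything. First, by the correctness facts recorded in Section \ref{SectPreliminaries}, $M_n(\mathbb{R})$ is correct about $\Gamma_{n+1}$ in both parities (for even $n$ it is even $\Sigma^1_{n+2}$-correct, and for odd $n$ it is exactly $\Sigma^1_{n+1}$-correct), so the payoff of any game in $\Game^\mathbb{R}\Gamma_{n+1}$ is computed correctly by the model. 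Second, by the periodicity theorem $\Gamma_{n+1}$ is precisely the Steel pointclass at this level carrying the scale property (namely $\Pi^1_{\mathrm{odd}}$ for even $n$ and $\Sigma^1_{\mathrm{even}}$ for odd $n$), and the canonical $\Gamma_{n+1}$-scales lie in $M_n(\mathbb{R})$. I will also use that the determinacy hypothesis supplies determinacy for both $\Pi^1_{n+1}$ and $\Sigma^1_{n+1}$ games on $\mathbb{R}$, since for these boldface classes $\Pi^1_{n+1}$-determinacy for real games yields determinacy of the dual class by re-encoding the payoff.

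For the inclusion $\Game^\mathbb{R}\Gamma_{n+1}\subseteq\Sigma_1^{M_n(\mathbb{R})}$, fix $A=\Game^\mathbb{R}B$ with $B\in\Gamma_{n+1}$, so $x\in A$ iff Player~I wins the real game $G_x$ with the $\Gamma_{n+1}$-payoff determined by $B$. The key point is that, because $B$ admits a scale in $M_n(\mathbb{R})$, a winning strategy for Player~I can be chosen canonically from the scale: by the third-periodicity analysis of games with scaled payoff, in its form for games on $\mathbb{R}$, whenever Player~I wins $G_x$ there is a winning strategy definable from the $\Gamma_{n+1}$-scale, hence lying in some proper initial segment $M_n(\mathbb{R})|\alpha$. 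Conversely, by $\Gamma_{n+1}$-correctness, any $\sigma$ that some level $M_n(\mathbb{R})|\alpha$ believes to be winning really is winning in $V$. Therefore $x\in A$ iff there are an ordinal $\alpha$ and a $\sigma\in M_n(\mathbb{R})|\alpha$ with $M_n(\mathbb{R})|\alpha\models$``$\sigma$ is a winning strategy for~I in $G_x$''; since satisfaction in a set-sized level is $\Delta_1$, this defines $A$ by a $\Sigma_1$ formula over $M_n(\mathbb{R})$, uniformly in the parity of $n$.

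For the reverse inclusion $\Sigma_1^{M_n(\mathbb{R})}\subseteq\Game^\mathbb{R}\Gamma_{n+1}$, fix $A$ defined by $x\in A\Leftrightarrow M_n(\mathbb{R})\models\exists v\,\varphi_0(v,x)$ with $\varphi_0$ bounded. Since $\Sigma_1$-truth reflects to levels and, by condensation, to countable elementary hulls, $x\in A$ holds iff there is a countable, countably iterable premouse $\mathcal{N}$ over a countable set of reals $\mathbb{R}_0\ni x$ which iterates correctly into (equivalently, compares correctly with) $M_n^\sharp(\mathbb{R}_0)$ and satisfies $\exists v\,\varphi_0(v,x)$. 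I then capture the existence of such a witness by a real game in the style of \cite{AgMu}: along the run Player~I plays, on the even reals, the set $\mathbb{R}_0$ together with a code for a putative witness $\mathcal{N}$ and the existential witness $v$, while Player~II uses the odd reals to mount an iterability challenge, playing a countable iteration tree $\mathcal{T}$ on $\mathcal{N}$. The payoff declares Player~I the winner exactly when $\mathcal{N}$ is a genuine witness and Player~II's challenge fails, i.e.\ $\mathcal{T}$ has the correct cofinal wellfounded branch with the right $Q$-structure. For even $n$ this iterability-verification is $\Pi^1_{n+1}$, so the game lies in $\Game^\mathbb{R}\Pi^1_{n+1}$, and Player~I wins iff a genuine witness exists, i.e.\ iff $x\in A$.

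The main obstacle is the left-to-right inclusion for odd $n$. There the available payoff class is only $\Sigma^1_{n+1}$, whereas the canonical iteration strategy guiding the $Q$-structure analysis is $\Pi^1_{n+1}$, so the verification used in the even case is one quantifier too complex. The remedy is the promised variation of the games of \cite{AgMu}: we exploit that, for odd $n$, $M_n(\mathbb{R})$ is $\Sigma^1_{n+1}$-correct and computes $\Sigma^1_{n+2}$-truth by forcing over its collapse algebra, and we fold a genericity certificate into the run---having a player enumerate a generic for the relevant collapse so that the correctness of $\mathcal{N}$ can be read off a $\Sigma^1_{n+1}$ condition on the generic extension rather than a $\Pi^1_{n+1}$ condition on $V$. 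Arranging the branch- and $Q$-structure bookkeeping so that the resulting payoff is genuinely $\Sigma^1_{n+1}$, while still forcing Player~I to produce a legitimate iterable witness, is the delicate step; the determinacy hypothesis is used throughout to guarantee that the auxiliary games $G_x$ and the witness games are determined, and hence that $\Game^\mathbb{R}\Gamma_{n+1}$ behaves as a bona fide pointclass.
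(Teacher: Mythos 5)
There are two genuine gaps, one in each inclusion.

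For the inclusion $\Game^\mathbb{R}\Gamma_{n+1}\subseteq\Sigma_1^{M_n(\mathbb{R})}$, you derive the $\Sigma_1$ definition from the claim that whenever Player~I wins the real game $G_x$, a winning strategy for the \emph{full} game on $\mathbb{R}$ lies in some level $M_n(\mathbb{R})|\alpha$, obtained ``canonically from the scale'' via Third Periodicity for games on $\mathbb{R}$. This is exactly the step that fails under the stated hypotheses: for games on $\mathbb{N}$ Third Periodicity selects the \emph{least} good move, but for games on $\mathbb{R}$ the set of good moves at each stage is a $\Game^\mathbb{R}\Gamma_{n+1}$ set of reals, and selecting from it requires the uniformization (hence scale) property for $\Game^\mathbb{R}\Gamma_{n+1}$. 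The paper only obtains this under the additional assumption $M_n^\sharp(\mathbb{R})\models\ad$ (this is the content of Theorem~\ref{TheoremCorrectnessGames}, where the authors explicitly note that $\ad$ is needed for precisely this selection, and state that they know of no proof of $(\Game^\mathbb{R}\Pi^1_{n+1})^{M_n(\mathbb{R})}=\Game^\mathbb{R}\Pi^1_{n+1}$ without it). Theorem~\ref{TheoremSigma1} does not assume $\ad$ in the model. The paper's route avoids producing any strategy for the full real game inside $M_n(\mathbb{R})$: it reflects to a countable hull $W$, restricts the game to $A=\mathbb{R}^W$, codes $A$ by a real $x_A$ so that $G(x_A)$ is a game on $\mathbb{N}$ where ordinary Third Periodicity (for odd $n$) or plain $\Sigma^1_{n+2}$-correctness (for even $n$) applies, and records the answer as the \emph{forcing} statement ``$\Vdash_{\coll(\omega,\mathbb{R})}$ Player~I wins $G_{\mathbb{R}^V}$'' in $M_n(A)$, which then lifts to $M_n(\mathbb{R})$ by elementarity. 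The determinacy hypothesis enters only to run the converse direction (if~I does not win $G$ then~II does, and II's strategy restricts to $G_A$). Your argument needs to be replaced by something of this kind.

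For the inclusion $\Sigma_1^{M_n(\mathbb{R})}\subseteq\Game^\mathbb{R}\Sigma^1_{n+1}$ with $n$ odd, you correctly identify that the even-case verification is one quantifier too complex, but your proposed fix --- folding a collapse-generic ``genericity certificate'' into the run so that correctness of the witness becomes a $\Sigma^1_{n+1}$ condition --- is left entirely unexecuted; you yourself flag the payoff-complexity bookkeeping as ``the delicate step'' without carrying it out, and it is far from clear it can be made to work while still forcing Player~I to produce a genuinely iterable witness. The paper's solution is structurally different: in $G^{\text{odd}}_\psi$ the roles are reversed. Player~I is required to play the theory of a $\Pi^1_{n+1}$-iterable premouse that is \emph{not} $n$-small (all proper initial segments $n$-small) and has \emph{no} proper initial segment satisfying $\psi$; the winning set for Player~I is still $\Pi^1_{n+1}$, so the witness's existence is equivalent to Player~\emph{II} winning, which is a $\Game^\mathbb{R}\Sigma^1_{n+1}$ condition. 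The witness is then extracted ``from above'': given a winning strategy $\sigma$ for~II, one runs it against Player~I playing the theory of $M_n^\sharp(\mathbb{R})$ restricted to a countable hull containing $\sigma$, and concludes that $M_n^\sharp(\mathbb{R})$ has a proper ($n$-small, countably iterable) initial segment satisfying $\psi$. This uses crucially that for odd $n$ the non-$n$-small sharp $M_n^\sharp$ is itself $\Pi^1_{n+1}$-iterable, which is the parity-specific fact your sketch does not exploit. (A smaller discrepancy: in the paper's games Player~II plays only reals and iterability is encoded in the $\Pi^1_{n+1}$ payoff predicate on the theory; there is no iteration-tree challenge played by~II as in your design, so your complexity count for the even case would also need to be reworked, though that part is plausibly repairable.)
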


We shall prove the theorem by considering each inclusion separately in the cases for even $n$ and odd $n$.

\begin{definition}
Suppose $G$ is a game on reals and $A
\subset\mathbb{R}$.
We denote by $G_{A}$ the  modification of $G$ in which we demand of each player that $x \in A$ for every move $x$.
\end{definition}
Thus, $G = G_{\mathbb{R}}$, and for $A
\subset\mathbb{R}$, the winning set for Player I in $G_A$ is $\{x \in \mathbb{R}: $ either $x_{(n)}\not \in A$ for some $n\in\mathbb{N}$ and the least such $n$ is odd, or $x_{(n)} \in A$ for all $n\in\mathbb{N}$ and $x$ is a winning play for Player I in $G\}$. 

Towards proving Theorem \ref{TheoremSigma1}, the following lemma allows us to define $\Game^\mathbb{R}\Pi^1_{n+1}$ sets in a $\Sigma_1$ way over $M_n(\mathbb{R})$ when $n$ is even.
\begin{lemma}\label{LemmaSigma1Eequiv}
Let $n\in\mathbb{N}$ be even and suppose that $M_n^\sharp$ exists and that $\Pi^1_{n+1}$ games on $\mathbb{R}$ are determined. 
Then, for every $\Pi^1_{n+1}$ game $G$ with moves in $\mathbb{R}$,  the following are equivalent:
\begin{enumerate}
\item Player I has a winning strategy for $G$, and
\item \label{cond2ClaimSigma1} $M_n(\mathbb{R})\models \text{``$\Vdash_{\coll(\omega,\mathbb{R})}$ Player I has a winning strategy for $G_{\mathbb{R}^V}$.''}$
\end{enumerate}
\end{lemma}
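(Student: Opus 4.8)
The plan is to pass to a $\coll(\omega,\mathbb{R})$-generic $g$, over which $\mathbb{R}^V$ becomes countable so that $G_{\mathbb{R}^V}$ reorganizes into an integer game, and then to shuttle the statement ``Player I wins $G_{\mathbb{R}^V}$'' between $V[g]$ and $M_n(\mathbb{R})[g]$ via correctness, and between $V[g]$ and $V$ via determinacy and absoluteness. First I would fix $g$ generic for $\coll(\omega,\mathbb{R})$ over $V$ (hence over $M_n(\mathbb{R})$, since $\mathbb{R}^V=\mathbb{R}^{M_n(\mathbb{R})}$) together with a real $e\in M_n(\mathbb{R})[g]$ coding an enumeration of $\mathbb{R}^V$ in order type $\omega$. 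Transporting moves along $e$ identifies $G_{\mathbb{R}^V}$ with an integer game whose payoff is $\Pi^1_{n+1}(e)$; the rule-clause in the definition of $G_A$ becomes vacuous, since every integer decodes to a member of $\mathbb{R}^V$. A routine quantifier count then shows that ``Player I has a winning strategy'' for this game is $\Sigma^1_{n+2}(e)$. As $\coll(\omega,\mathbb{R})$ is weakly homogeneous and $G$, $\mathbb{R}^V$ are ground-model definable, condition (\ref{cond2ClaimSigma1}) is equivalent to $M_n(\mathbb{R})[g]\models$ ``Player I wins $G_{\mathbb{R}^V}$'' for this (equivalently, any) $g$.

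Next I would eliminate the reference to $M_n(\mathbb{R})[g]$ in favor of $V[g]$. In $V[g]$ the set $A=\mathbb{R}^V$ is countable, so $M_n(\mathbb{R})$ may be viewed as $M_n(A)$ for a countable set of reals; after checking that $M_n^\sharp(\mathbb{R})^V$ survives the collapse as $M_n^\sharp(A)^{V[g]}$, the correctness facts recorded in Section \ref{SectPreliminaries} give that $M_n(\mathbb{R})[g]$ is $\SIGMA^1_{n+2}$-correct in $V[g]$ (recall $n$ is even). Since the displayed statement is $\Sigma^1_{n+2}(e)$ with $e\in M_n(\mathbb{R})[g]$, it holds in $M_n(\mathbb{R})[g]$ if and only if it holds in $V[g]$. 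Thus (\ref{cond2ClaimSigma1}) is equivalent to $V[g]\models$ ``Player I wins $G_{\mathbb{R}^V}$''.

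It remains to see that the latter is equivalent to (1), i.e.\ to Player I winning $G$ in $V$. A winning strategy for either player in $G$ restricts to a winning strategy for the same player in $G_{\mathbb{R}^V}$: in $G_{\mathbb{R}^V}$ the first player to leave $\mathbb{R}^V$ loses, and a strategy that plays reals never does so. If Player I wins $G$ in $V$, the associated strategy persists to a winning strategy for $G_{\mathbb{R}^V}$ in $V[g]$; if Player I does not win $G$ in $V$, then by the assumed determinacy of $\Pi^1_{n+1}$ games on $\mathbb{R}$, Player II wins $G$, and Player II's strategy likewise persists, so Player I does not win $G_{\mathbb{R}^V}$ in $V[g]$. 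Chaining these equivalences with the previous two paragraphs yields the lemma.

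The main obstacle is the persistence claim used in the last step: that a winning strategy for a $\Pi^1_{n+1}$ game on $\mathbb{R}$ in $V$ remains winning after $\mathbb{R}^V$ is collapsed, even against the new plays introduced in $V[g]$. I would handle this through the tree representations of $\Pi^1_{n+1}$ sets available from the hypothesis: under the existence of $M_n^\sharp(\mathbb{R})$ together with $\Pi^1_{n+1}$-determinacy, $\Pi^1_{n+1}$ payoffs are $\coll(\omega,\mathbb{R})$-universally Baire, so a strategy whose plays project into the tree for the payoff in $V$ continues to do so in $V[g]$. A secondary technical point is the preservation of $M_n^\sharp(\mathbb{R})$ under the collapse, needed in order to read off the correctness of $M_n(\mathbb{R})[g]$ from the countable case; this I would treat as a standard absoluteness argument for the sharp.
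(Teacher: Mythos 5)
Your architecture---collapse $\mathbb{R}^V$ generically over $V$, reorganize $G_{\mathbb{R}^V}$ as an integer game in $V[g]$, and shuttle the winning condition through $M_n(\mathbb{R})[g]$ by $\SIGMA^1_{n+2}$-correctness---is genuinely different from the paper's, and it founders on exactly the point you flag as ``the main obstacle.'' The step you need, namely that a winning strategy in $V$ for a $\Pi^1_{n+1}$ game on $\mathbb{R}$ (for Player I), or for its $\Sigma^1_{n+1}$ complement (for Player II, in your converse direction), remains winning in $V[g]$ against the new $\omega$-sequences of ground-model reals added by $\coll(\omega,\mathbb{R})$, is a third-order statement: the strategy is a function on finite sequences of reals, not a real, so projective generic absoluteness does not apply to ``$\sigma$ is winning.'' Your proposed repair via universally Baire representations would require the $\Pi^1_{n+1}$ payoff and its complement to be absolutely complemented for posets of size $2^{\aleph_0}$, and this is not available from ``$M_n^\sharp$ exists plus $\Pi^1_{n+1}$-determinacy for games on $\mathbb{R}$''; it is the sort of fact one extracts from actual Woodin cardinals in $V$ or from iterability of the relevant mice in $V[g]$---the latter being precisely your unaddressed ``secondary technical point'' about $M_n^\sharp(\mathbb{R})$ surviving the collapse, which is not a routine absoluteness argument for $\mathbb{R}$-premice. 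Even granting the trees, converting ``no new play consistent with $\sigma$ projects into the tree for the complement'' into a proof requires an auxiliary closed-game/rank argument that the proposal does not supply. As written, the proof does not close.

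The paper's proof avoids all of this by never forcing over $V$. One reflects $G$ to $G_A$ for $A = \mathbb{R}^W$, where $W$ is the transitive collapse of a countable hull of some $V_\kappa$ containing the winning strategy $\sigma$; the extension $\bar\sigma$ of the collapsed strategy agrees with $\sigma$ on moves in $A$, so every full play of $G_A$ by $\bar\sigma$ in which Player II stays in $A$ is literally a play of $G$ by $\sigma$, hence winning in $V$---no preservation claim is needed. The only forcing performed is $\coll(\omega,A)$ over the countable-in-$V$ mouse $M_n(A)$, where ``Player I wins $G(x_A)$'' is a genuine $\Sigma^1_{n+2}(x_A)$ statement about a real, so the $\SIGMA^1_{n+2}$-correctness of $M_n(A)[g]$ (for $g\in V$) applies; the identification $M_n^\sharp(A) = M_n^\sharp(\mathbb{R})^W$ and elementarity then transfer the forcing statement to $M_n(\mathbb{R})$. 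The converse is symmetric, using the determinacy hypothesis in $V$ to hand Player II a winning strategy for $G_A$ in $V$ if Player I has none for $G$. I recommend recasting your argument along these lines: quantify over countable $A \in \mathcal{P}_{\omega_1}(\mathbb{R})$ inside $V$ rather than making $\mathbb{R}$ countable by forcing over $V$.
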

\begin{proof}
Fix a game $G$ with $\Pi^1_{n+1}$ payoff.
We will work with $G$ and with games of the form $G_A$.
If $A$ is countable, and $x_A$ codes $A$, say
\[A = \{x_{(i)}: i\in\mathbb{N}\},\]
we will also consider the further variant of $G$ in which instead of playing reals in $A$, players play their indices in $x_A$. We denote this variant by $G(x_A)$. 

We observe that for a set $A \in \mathcal{P}_{\omega_1}(\mathbb{R})$, Player I has a winning strategy in $G_A$ if, and only if, she has one in $G({x_A})$, for any $x_A$ as above. Suppose that this is the case for some $A \in \mathcal{P}_{\omega_1}(\mathbb{R})$ and some $x_A \in\mathbb{R}$.
By definition, $G(x_A)$ is a $\Pi^1_{n+1}(x_A)$ game on $\mathbb{N}$. Player I having a winning strategy for $G(x_A)$ is expressible by a formula in
\[\Game\Pi^1_{n+1}(x_A) = \Sigma^1_{n+2}(x_A).\]
\begin{claim}
The following are equivalent:
\begin{enumerate}
\item Player I has a winning strategy for $G(x_A)$, and
\item $M_n(A)\models \text{``$\Vdash_{\coll(\omega,A)}$ Player I has a winning strategy for $G_A$.''}$
\end{enumerate}
\end{claim}
\begin{proof}
This follows by absoluteness.
\end{proof}

Now, suppose Player I has a winning strategy $\sigma$ for $G$. Let $W$ be the transitive collapse of a countable elementary substructure of some large $V_\kappa$ and $A = \mathbb{R}^W$. %; thus, $G_A$ is $W$'s version of $G$.
By elementarity, there is a winning strategy $\sigma'$ for $G_A$ in $W$, and $\sigma'$ agrees with $\sigma$ on moves in $W$. $\sigma'$ can be extended to a total strategy $\bar\sigma$ for $G_A$ in $V$ by assigning arbitrary responses if  Player II ever makes a move outside of $A$.
Because it agrees with $\sigma$ on moves in $W$, $\bar\sigma$ instructs Player I to play reals in $A$ as long as Player II does, so every full play by $\bar\sigma$ in which Player II has played only reals in $A$ will be a winning move in $G_A$. It follows that $\bar\sigma$ is a winning strategy for $G_A$ in $V$ for Player I. By the claim, 
\[M_n(A)\models \text{``$\Vdash_{\coll(\omega,A)}$ Player I has a winning strategy for $G_A$.''}\]
By the elementarity between $V_\kappa$ and $W$, $M_n^\sharp(\mathbb{R})^W$ is the transitive collapse of a countable elementary substructure of $M_n^\sharp(\mathbb{R})$ and is therefore $\omega_1$-iterable. Thus,
\[M_n^\sharp(\mathbb{R})^W = M_n^\sharp(\mathbb{R}^W) = M_n^\sharp(A),\]
so, by elementarity,
\[M_n(\mathbb{R})\models \text{``$\Vdash_{\coll(\omega,\mathbb{R})}$ Player I has a winning strategy for $G_{\mathbb{R}^V}$.''}\]
%The existence of such an $N$ is $\Sigma_1^{M_n(\mathbb{R})}$. To complete the proof of the lemma, it suffices to assume that there is some $N\lhd M_n(\mathbb{R})$ as above
%	with $n$ Woodin cardinals such that:
%\[N\models \text{``$\Vdash_{\coll(\omega,\mathbb{R})}$ Player I has a winning strategy for $G$''}\]
Conversely, suppose that
\[M_n(\mathbb{R})\models \text{``$\Vdash_{\coll(\omega,\mathbb{R})}$ Player I has a winning strategy for $G_{\mathbb{R}^V}$.''}\]
We argue that Player I has a winning strategy in $G$. 

Let $W$ be the transitive collapse of a countable elementary substructure of some large $V_\kappa$ and $A = \mathbb{R}^W$. Then, 
\[M_n(A)\models \text{``$\Vdash_{\coll(\omega,A)}$ Player I has a winning strategy for $G_A$.''}\]
Leting $g\subseteq\coll(\omega,A)$ be $M$-generic, with $g\in V$,
\[M_n(A)[g]\models \text{``Player I has a winning strategy $\tau$ for $G(g)$.''}\]
Since $M_n(A)[g]$ has $n$ Woodin cardinals and is thus $\PI^1_{n+1}$-correct, and the statement that $\tau$ is winning for Player I is $\Pi^1_{n+1}(\tau,g)$, $\tau$ really is a winning strategy for $G(g)$. Thus, Player I has a winning strategy for $G_A$ in $V$.

Suppose towards a contradiction that Player I does not have a winning strategy for $G$. By our assumption on the determinacy of $\Pi^1_{n+1}$ games on $\mathbb{R}$, Player II has one, say $\sigma$. By elementarity, Player II has one for $G_A$ in $W$, say $\bar\sigma$, and $\bar\sigma$ agrees with $\sigma$ on moves in $W$, so it follows that $\bar\sigma$ is a winning strategy for Player II for $G_A$ in $V$ as well, which is the desired contradiction.
\end{proof}

\begin{lemma}
Let $n \in\mathbb{N}$ be even.
Suppose that $M^\sharp_n(\mathbb{R})$ exists and that $\Pi^1_{n+1}$ games on $\mathbb{R}$ are determined. Then 
\[\Game^\mathbb{R}\Pi^1_{n+1}\subseteq \Sigma_1^{M_n(\mathbb{R})}.\]
\end{lemma}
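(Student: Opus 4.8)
The plan is to reduce the inclusion to a uniform application of Lemma \ref{LemmaSigma1Eequiv} and then to verify that the model-theoretic side of the equivalence proved there is genuinely $\Sigma_1$ over $M_n(\mathbb{R})$. So fix a set $B \in \Game^\mathbb{R}\Pi^1_{n+1}$, say $B = \Game^\mathbb{R} A$ with $A \subseteq \omega^\omega \times \omega^\omega$ in $\Pi^1_{n+1}$. For each real $x$, let $G_x$ be the game on reals whose payoff for Player I is $\{(y_{(0)}, y_{(1)}, \hdots) : (x,y) \in A\}$. This is a $\Pi^1_{n+1}(x)$ game on $\mathbb{R}$, and by the definition of $\Game^\mathbb{R}$ we have $x \in B$ if and only if Player I has a winning strategy in $G_x$.

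Next I would apply Lemma \ref{LemmaSigma1Eequiv} to the family $(G_x)_{x\in\mathbb{R}}$. The hypotheses of that lemma are exactly our standing assumptions, and its proof is uniform in the payoff of the game, so it yields, uniformly in $x$,
\[ x \in B \iff M_n(\mathbb{R}) \models \text{``}\Vdash_{\coll(\omega,\mathbb{R})} \text{Player I has a winning strategy for } (G_x)_{\mathbb{R}^V}\text{.''} \]
It then remains to check that the right-hand side defines a $\Sigma_1$ subset of $M_n(\mathbb{R})$ in the parameter $x$, together with the predicates for $\mathbb{R}$ and the extender sequence, which are available in $\lpm$.

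For this, the key point is reflection to small initial segments, which is already contained in the proof of the Claim inside Lemma \ref{LemmaSigma1Eequiv}: the forcing statement holds in $M_n(\mathbb{R})$ if and only if it holds in some proper initial segment $N \lhd M_n(\mathbb{R})$ that has $n$ Woodin cardinals. The forward direction is the $\Sigma_1$-reflection of the winning-strategy statement inside the generic extension, while the backward direction uses that such an $N$ is closed under $y\mapsto M_{n-1}^\sharp(y)$ and hence $\Sigma^1_{n+1}$-correct, so that the assertion that a given strategy is winning, being $\Pi^1_{n+1}$ (a fortiori $\Pi^1_{n+2}$), is downward absolute to $N$. Granting this equivalence, I can rewrite the right-hand side as
\[ \exists N \big( N \lhd M_n(\mathbb{R}) \text{ has } n \text{ Woodin cardinals and } N \models \text{``}\Vdash_{\coll(\omega,\mathbb{R})} \text{I has a winning strategy for } (G_x)_{\mathbb{R}}\text{''}\big). \]
Since each such $N$ is an element of $M_n(\mathbb{R})$ and the matrix is absolute once $N$ and $x$ are fixed, this is a $\Sigma_1$ formula over $M_n(\mathbb{R})$, uniformly in $x$, exhibiting $B \in \Sigma_1^{M_n(\mathbb{R})}$.

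The main obstacle I expect is the bookkeeping around this final reflection: one must confirm both that the winning-strategy assertion is $\Sigma_1$ over the relevant collapse extensions (so that it reflects correctly) and that initial segments with $n$ Woodin cardinals compute it correctly, and that all of this is uniform in the parameter $x$ rather than holding only for a single fixed game. Since this is precisely the content assembled in the proof of Lemma \ref{LemmaSigma1Eequiv}, the work here is mostly to isolate the uniform $\Sigma_1$ formula and verify its uniformity, rather than to prove anything substantially new.
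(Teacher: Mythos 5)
Your proposal is correct and follows essentially the same route as the paper: reduce to Lemma \ref{LemmaSigma1Eequiv} applied uniformly in the parameter $x$, and then observe that the forcing statement reflects to (and is correctly computed by) proper initial segments of $M_n(\mathbb{R})$, so that membership in $B$ becomes an existential statement about initial segments and hence $\Sigma_1^{M_n(\mathbb{R})}$. The paper states this reflection step without elaboration, whereas you spell out the two correctness facts behind it; that is consistent with the intended argument.
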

\begin{proof}
Let $B \in \Game^\mathbb{R}\Pi^1_{n+1}$, say
\[B = \{x\in\mathbb{R}: \text{ Player I has a winning strategy in the game on reals with payoff $P_x$}\},\]
for some $\Pi^1_{n+1}$ set $P \subset \mathbb{R}\times\mathbb{R}$. Let us uniformly denote by $G_x$ the game on reals with payoff $P_x$.
By Lemma \ref{LemmaSigma1Eequiv},
\begin{align*}
B &= \Big\{x\in\mathbb{R}: M_n(\mathbb{R})\models \text{``$\Vdash_{\coll(\omega,\mathbb{R})}$ Player I has a winning strategy for $(G_{x})_{\mathbb{R}^V}$.''}\Big\}\\
&= \Big\{x\in\mathbb{R}: \exists M\lhd M_n(\mathbb{R})\,\\
&\qquad \qquad M\models \text{``$\Vdash_{\coll(\omega,\mathbb{R})}$ Player I has a winning strategy for $(G_{x})_{\mathbb{R}^V}$.''}\Big\}.
\end{align*}
The existence of such an $M$ is $\Sigma_1^{M_n(\mathbb{R})}$, so $B$ is $\Sigma_1^{M_n(\mathbb{R})}$.
\end{proof}

\begin{lemma}
Let $n\in\mathbb{N}$ be odd.
Suppose that $M^\sharp_n(\mathbb{R})$ exists and that $\Pi^1_{n+1}$ games on $\mathbb{R}$ are determined. Then,
\[\Game^\mathbb{R}\Sigma^1_{n+1}\subseteq \Sigma_1^{M_n(\mathbb{R})}.\]
\end{lemma}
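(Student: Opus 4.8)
The plan is to mirror the even case, adapting for the fact that for odd $n$ the model $M_n(\mathbb{R})$ can only correctly compute $\Pi^1_{n+1}$-truth directly, but can compute $\Sigma^1_{n+2}$-truth (equivalently, $\Game\Sigma^1_{n+1}$-truth for games on $\mathbb{N}$) by forcing over its collapse algebra. So I would first establish an analogue of Lemma~\ref{LemmaSigma1Eequiv} in which the payoff game $G$ is $\SIGMA^1_{n+1}$ rather than $\PI^1_{n+1}$, and then run the same $\Sigma_1$-definability argument verbatim. Concretely, given $B\in\Game^\mathbb{R}\SIGMA^1_{n+1}$, write $B=\{x:\text{Player I wins the game on reals with payoff }P_x\}$ for some $\SIGMA^1_{n+1}$ set $P$, and aim to show
\[
B = \Big\{x\in\mathbb{R}: \exists M\lhd M_n(\mathbb{R})\; M\models\text{``}\Vdash_{\coll(\omega,\mathbb{R})}\text{ Player I has a winning strategy for }(G_x)_{\mathbb{R}^V}\text{.''}\Big\},
\]
from which the $\Sigma_1^{M_n(\mathbb{R})}$ conclusion is immediate, exactly as in the even case.

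The key steps, in order, are as follows. First I would prove the odd analogue of the internal Claim: for $A\in\ctbleset$ with code $x_A$, Player I has a winning strategy for the game-on-$\mathbb{N}$ variant $G(x_A)$ if and only if $M_n(A)\models\text{``}\Vdash_{\coll(\omega,A)}$ Player I has a winning strategy for $G_A$.'' Here $G(x_A)$ is now a $\SIGMA^1_{n+1}(x_A)$ game on $\mathbb{N}$, so Player I winning is a $\Game\SIGMA^1_{n+1}(x_A)=\SIGMA^1_{n+2}(x_A)$ statement. The correctness input is the crucial adjustment: for odd $n$, $M_n(A)$ is only $\SIGMA^1_{n+1}$-correct outright, but forcing over $\coll(\omega,A)$ lets one capture $\SIGMA^1_{n+2}$-truth — this is precisely the Neeman/Woodin correctness fact quoted in the Preliminaries (``$M^\sharp_{2n+1}(x)$ \dots can compute $\SIGMA^1_{2n+3}$-truth by forcing over its collapse algebra''). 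So in the forward direction of the internal Claim, passing to a generic $g$ and using that $M_n(A)[g]$ computes $\SIGMA^1_{n+2}$-truth correctly gives the genuine existence of a winning strategy; the downward-absoluteness direction uses that initial segments closed under $x\mapsto M^\sharp_{n-1}(x)$ are $\SIGMA^1_n$-correct, so $\PI^1_{n+1}$ statements reflect. Second, I would run the reflection-to-a-countable-substructure argument unchanged: take $W$ the transitive collapse of a countable elementary hull of a large $V_\kappa$, set $A=\mathbb{R}^W$, extend the $W$-strategy for $G_A$ to a total strategy $\bar\sigma$ in $V$ that plays arbitrarily off $A$, and conclude it is winning for $G_A$ in $V$; then use $M_n^\sharp(\mathbb{R})^W=M_n^\sharp(A)$ together with $\omega_1$-iterability and elementarity to transfer the statement up to $M_n(\mathbb{R})$.

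The main obstacle I expect is bookkeeping the parity shift in the correctness levels correctly, and in particular verifying the converse (hard) direction of the internal Claim without circularity. In the even case the converse was essentially free because the relevant ``Player~I wins'' statement was $\PI^1_{n+1}$ and hence downward absolute to any model with $n$ Woodin cardinals; but for odd $n$ the statement ``Player~I wins $G(x_A)$'' sits one level higher at $\SIGMA^1_{n+2}$, so I cannot simply appeal to $\PI^1_{n+1}$-correctness of $M_n(A)$. This is exactly why the forcing-over-the-collapse formulation $\Vdash_{\coll(\omega,A)}$ appears in condition~(2): the extra level of truth is recovered generically rather than outright. I would therefore need to argue that if Player~I fails to win $G(x_A)$, then by $\SIGMA^1_{n+1}$-correctness of models closed under $x\mapsto M^\sharp_{n-1}(x)$ (so that the complementary $\PI^1_{n+1}$ fact reflects down) no initial segment $M\lhd M_n(A)$ with $n$ Woodin cardinals can force Player~I a winning strategy for $G_A$, closing the equivalence. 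Everything else — the extension of strategies off $A$, the elementarity transfer, and the final $\Sigma_1$-over-$M_n(\mathbb{R})$ reading — is routine and identical to the even case, so the parity-sensitive correctness argument is the sole genuinely new ingredient.
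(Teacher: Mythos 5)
There is a genuine gap in the forward direction of your internal Claim, and it sits exactly where the paper's proof has its one genuinely new ingredient. You propose to pass from ``Player~I wins $G(x_A)$'' (a true $\Sigma^1_{n+2}(x_A)$ statement) to ``$M_n(A)\models{}$`$\Vdash_{\coll(\omega,A)}$ Player~I has a winning strategy for $G_A$''' by invoking the Neeman--Woodin fact that $M_n(A)$ computes $\Sigma^1_{n+2}$-truth ``by forcing over its collapse algebra,'' and you identify that collapse algebra with $\coll(\omega,A)$. That identification is incorrect: the collapse algebra in the Neeman--Woodin result is a large forcing internal to the model (collapsing a Woodin cardinal of the model), whereas $\coll(\omega,A)$ is one of the ``small partial orders, of size smaller than the least measurable cardinal'' discussed in the Preliminaries, and such forcings preserve but do not raise the correctness level. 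So for odd $n$ the extension $M_n(A)[g]$ is still only $\Sigma^1_{n+1}$-correct, and a true $\Sigma^1_{n+2}(x_A)$ statement need not hold in it: its existential witness --- a winning strategy --- may simply fail to be an element of the model. Nothing in your sketch puts such a witness inside $M_n(A)[g]$.

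The paper closes exactly this gap by producing the witness internally: since $n+1$ is even, the Third Periodicity Theorem (Moschovakis, Corollary~6E.2), applicable because the hypothesis yields $\Pi^1_{n+1}$-determinacy for games on $\mathbb{N}$, gives Player~I a winning strategy for $G(x_A)$ which is $\Delta^1_{n+2}(x_A)$, and by a theorem of Woodin (Steel, Theorem~4.8) every $\Delta^1_{n+2}(x_A)$ real belongs to $M_n(x_A)\subseteq M_n(A)$. Once the strategy $\tau$ is in the model, the statement ``$\tau$ is winning,'' being of the form $\forall y\,\Sigma^1_{n+1}(y,\tau)$ and hence $\Pi^1_{n+2}$, reflects downward to the $\Sigma^1_{n+1}$-correct model instance by instance, and the rest of the even-case argument goes through. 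Your proposal contains no substitute for this step, and it is not optional. (Your treatment of the converse direction is also off by one level --- for a $\Sigma^1_{n+1}$ payoff the assertion that a given strategy is winning for Player~I is $\Pi^1_{n+2}$, not $\Pi^1_{n+1}$ --- but that direction can still be handled by the determinacy-plus-reflection contradiction exactly as in the even case; the forward direction is where the argument genuinely breaks.)
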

\begin{proof}
This is similar to the even case. One has to
verify the analogue of Lemma \ref{LemmaSigma1Eequiv} -- that the following are equivalent for a $\Sigma^1_{n+1}$ game on reals $G$:
\begin{enumerate}
\item Player I has a winning strategy for $G$, and
\item $M_n(\mathbb{R})\models \text{``$\Vdash_{\coll(\omega,\mathbb{R})}$ Player I has a winning strategy for $G_{\mathbb{R}^V}$.''}$.
\end{enumerate}
Suppose thus that $G$ is $\Sigma^1_{n+1}$ (i.e., that the winning set for Player I is $\Sigma^1_{n+1}$), so that the games $G(x_A)$ are $\Sigma^1_{n+1}(x_A)$. The hypothesis implies $\PI^1_{n+1}$-determinacy for games on $\mathbb{N}$, so, using that $n+1$ is even, we may appeal to the Third Periodicity Theorem (see Moschovakis \cite[Corollary 6E.2]{Mo09}) to deduce that if I wins $G(x_A)$, then I has a winning strategy in $\Delta^1_{n+2}(x_A)$. All such reals belong to $M_n(x_A)$, by a theorem of Woodin (see Steel \cite[Theorem 4.8]{St95}).
The rest of the proof is as before.
\end{proof}

{\begin{center}
{{---}{---}}
\end{center}
}

To complete the proof of the theorem, it remains to prove the converse inclusions in the cases that $n$ is even and odd. 
We will need the following lemma:
\begin{lemma}\label{LemmaClubReals}
Suppose either that $\PI^1_{n+1}$ games on $\mathbb{R}$ are determined or that $M_n^\sharp(\mathbb{R})$ exists. Then, there is a closed, cofinal subset $C$ of $\mathcal{P}_{\omega_1}(\mathbb{R})$ such that if $A \in C$, then 
\[M_n^\sharp(A)\cap\mathbb{R} = A.\]
\end{lemma}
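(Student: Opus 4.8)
The goal is to show the existence of a closed, cofinal $C\subseteq\mathcal{P}_{\omega_1}(\mathbb{R})$ such that $M_n^\sharp(A)\cap\mathbb{R}=A$ for every $A\in C$. The inclusion $A\subseteq M_n^\sharp(A)\cap\mathbb{R}$ is immediate, since $A$ is the set over which the premouse is constructed and is coded by the predicate $\dot X$; so the content of the lemma is the reverse inclusion, namely that no \emph{new} reals are generated inside $M_n^\sharp(A)$. The plan is to exploit the fact that the map $A\mapsto M_n^\sharp(A)$ is suitably definable and that its reals depend only on $A$ in a local way, and then to run a standard closing-off (Skolem-hull) argument to produce the club. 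Under either hypothesis---$\PI^1_{n+1}$-determinacy for games on $\mathbb{R}$ or the bare existence of $M_n^\sharp(\mathbb{R})$---the sharp $M_n^\sharp(A)$ exists and is $\omega_1$-iterable for every countable $A$, so the operation is well-defined on all of $\mathcal{P}_{\omega_1}(\mathbb{R})$.

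The core observation I would use is a \emph{continuity/reflection} property of the sharp operation: if $A=\bigcup_{k}A_k$ is an increasing union of countable sets of reals, then every real in $M_n^\sharp(A)$ already appears in $M_n^\sharp(A_k)$ for some $k$, because any given real of $M_n^\sharp(A)$ is built from only finitely much fine-structural information over a countable portion of $A$. More precisely, I would first establish closure: define
\[
C=\{A\in\mathcal{P}_{\omega_1}(\mathbb{R}): M_n^\sharp(A)\cap\mathbb{R}=A\}
\]
and check that $C$ is closed under countable increasing unions using this continuity. Then for cofinality, given an arbitrary countable $A_0$, I would build an increasing $\omega$-chain $A_0\subseteq A_1\subseteq\cdots$ by alternately closing off: having produced $A_k$, let $A_{k+1}$ be a countable set containing $A_k\cup(M_n^\sharp(A_k)\cap\mathbb{R})$. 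The union $A_\omega=\bigcup_k A_k$ then satisfies $M_n^\sharp(A_\omega)\cap\mathbb{R}=A_\omega$: any real of $M_n^\sharp(A_\omega)$ lies in some $M_n^\sharp(A_k)\cap\mathbb{R}\subseteq A_{k+1}\subseteq A_\omega$ by continuity, and the reverse inclusion is trivial. This simultaneously proves $A_\omega\in C$ and $A_0\subseteq A_\omega$, giving cofinality; closure under unions is then what upgrades cofinality to a genuine club.

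The step I expect to be the main obstacle is the continuity property---justifying that $M_n^\sharp(\bigcup_k A_k)\cap\mathbb{R}=\bigcup_k\bigl(M_n^\sharp(A_k)\cap\mathbb{R}\bigr)$. This is not purely formal because $M_n^\sharp(A)$ is a relativized premouse whose extender sequence depends on all of $A$, and one must argue that individual reals are nonetheless captured over countable initial pieces. I would handle this via a condensation/hull argument: take a countable elementary substructure $W\prec M_n^\sharp(A_\omega)$ containing the given real $r$ and the relevant parameters, transitively collapse it, and identify the collapse with $M_n^\sharp(\mathbb{R}^W)$ using $\omega_1$-iterability together with the minimality of $M_n^\sharp$, exactly as in the proof of Lemma~\ref{LemmaSigma1Eequiv} where $M_n^\sharp(\mathbb{R})^W=M_n^\sharp(A)$. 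Since $\mathbb{R}^W$ is a countable subset of $A_\omega$, it is contained in some $A_k$, and then $r\in M_n^\sharp(\mathbb{R}^W)\cap\mathbb{R}$ reflects into $M_n^\sharp(A_k)\cap\mathbb{R}$ by monotonicity of the sharp operation under inclusion of the base set. The two hypotheses enter only to guarantee that all these sharps exist and are iterable; the combinatorial skeleton of the proof is identical in both cases.
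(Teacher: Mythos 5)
There is a genuine gap, and it sits exactly where you predicted: the ``continuity'' property. Your justification of it does not work. You take a countable hull $W\prec M_n^\sharp(A_\omega)$ and argue that $\mathbb{R}^W$, being a countable subset of $A_\omega=\bigcup_k A_k$, is contained in some $A_k$. But a countable subset of a countable increasing union need not be contained in any single term of the union --- only \emph{finite} subsets are so captured. Indeed $A_\omega$ is itself countable, so $W$ may well be all of $M_n^\sharp(A_\omega)$ and $\mathbb{R}^W=A_\omega$, and the hull buys you nothing. Since both your closure step and your cofinality step reduce to this continuity claim, neither goes through. Worse, the identity $M_n^\sharp(\bigcup_k A_k)\cap\mathbb{R}=\bigcup_k\bigl(M_n^\sharp(A_k)\cap\mathbb{R}\bigr)$ is itself doubtful: a real of $M_n^\sharp(A)$ such as a master code or the theory of a level of the model with the predicate $\dot X=A$ depends on all of $A$ simultaneously, and there is no reason it should appear over any proper initial piece $A_k$. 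Two further points are asserted without justification: the monotonicity $M_n^\sharp(A_k)\cap\mathbb{R}\subseteq M_n^\sharp(A)\cap\mathbb{R}$ for $A_k\subseteq A$ (note $A_k$ need not even be an element of $M_n^\sharp(A)$), and --- in the determinacy case --- the existence and $\omega_1$-iterability of $M_n^\sharp(A)$ for every countable $A$, which is a nontrivial matter that the paper obtains only on a club and only via the game constructions of \cite{AgMu}.

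The paper's proof avoids all of this by not attempting to show that the full set $\{A: M_n^\sharp(A)\cap\mathbb{R}=A\}$ is closed; it instead exhibits a club inside it. In the case where $M_n^\sharp(\mathbb{R})$ exists, the club consists of the sets $A=\mathbb{R}^{N_A}$ for $N_A$ a countable elementary substructure of a large $V_\kappa$: then $M_n^\sharp(\mathbb{R})^{N_A}$ embeds into $M_n^\sharp(\mathbb{R})$, hence is $\omega_1$-iterable and equal to $M_n^\sharp(A)$, and since trivially $M_n^\sharp(\mathbb{R})\cap\mathbb{R}=\mathbb{R}$, elementarity gives $M_n^\sharp(A)\cap\mathbb{R}=A$. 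Closure and cofinality of this family come for free, because it is generated by closing under functions of \emph{finitely many} real arguments (Skolem functions), for which the union argument you wanted is actually valid. In the determinacy case the paper simply cites \cite[Lemma 3.8]{AgMu}, where the relevant $a$-premice are produced from winning strategies in games on $\mathbb{R}$. If you want to salvage your write-up, replace closure under $A\mapsto M_n^\sharp(A)\cap\mathbb{R}$ by closure under a finitary function witnessing the club of real-traces of hulls; at that point your argument becomes the paper's.
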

\begin{proof}
The conclusion is proved under the assumption of $\PI^1_{n+1}$-determinacy for games on $\mathbb{R}$ in \cite[Lemma 3.8]{AgMu}; there, it is stated under the stronger assumption of determinacy for $\PI^1_{n+1}$-determinacy for games of length $\omega^2$, but the proof only requires $\PI^1_{n+1}$-determinacy for games on $\mathbb{R}$. In the other case, there is a closed cofinal subset of $\mathcal{P}_{\omega_1}(\mathbb{R})$ consisting of sets $A$ which are sets of reals of transitive collapses of countable elementary substructures $N_A$ of some large $V_\kappa$, so that $M_n^\sharp(\mathbb{R})^{N_A}$ embeds elementarily into $M_n^\sharp(\mathbb{R})$ and thus is $\omega_1$-iterable and equal to $M_n^\sharp(A)$. Since clearly 
\[M_n^\sharp(\mathbb{R})\cap\mathbb{R} = \mathbb{R},\]
the result follows.
\end{proof}

Let us begin with the case of even $n$:
\begin{lemma}\label{LemmaConverseInclusionEven}
Let $n\in\mathbb{N}$ be even. Suppose that $M_n^\sharp(\mathbb{R})$ exists. Then,
\[\mathcal{P}(\mathbb{R}) \cap \Sigma_1^{M_n(\mathbb{R})} \subseteq \Game^\mathbb{R}\Pi^1_{n+1}.\]
\end{lemma}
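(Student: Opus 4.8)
The goal is to show that every set of reals which is $\Sigma_1^{M_n(\mathbb{R})}$ (for even $n$) lies in $\Game^\mathbb{R}\Pi^1_{n+1}$. The natural plan is to take such a set $B$, write out its $\Sigma_1$ definition, and convert the existential witness search over $M_n(\mathbb{R})$ into a real game whose payoff is $\Pi^1_{n+1}$. So suppose
\[
B = \{x \in \mathbb{R} : M_n(\mathbb{R}) \models \exists y\, \varphi(x,y,\mathbb{R})\},
\]
where $\varphi$ is $\Sigma_0$ in the language $\lpm$. Since $M_n(\mathbb{R})$ is obtained by iterating the top measure of $M_n^\sharp(\mathbb{R})$, any $\Sigma_1$ truth is witnessed inside some initial segment, and so $x \in B$ iff there is a countable premouse-witness to the existential statement. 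The strategy is to have the two players in a game on reals cooperatively build (a countable elementary substructure coding) such a witnessing initial segment, with Player I attempting to verify the $\Sigma_1$ fact and Player II challenging its iterability.

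First I would use Lemma \ref{LemmaClubReals} to fix a club $C \subseteq \mathcal{P}_{\omega_1}(\mathbb{R})$ on which $M_n^\sharp(A)\cap\mathbb{R}=A$, so that the local models $M_n^\sharp(A)$ genuinely approximate $M_n^\sharp(\mathbb{R})$. Then I would set up a real game in which, over $\omega$ rounds, the players enumerate a countable set $A$ of reals (so that $A \in C$) together with a putative initial segment $M \lhd M_n(A)$ and a witness $y$ such that $M \models \varphi(x,y,A)$. By the reflection built into the $\Sigma_1$ definition, $x \in B$ should correspond exactly to the existence of such an iterable $M$ over a suitable $A$. The essential point of the game is to reduce the assertion ``$M$ is (sufficiently) iterable, so that $M$ really is an initial segment of $M_n(A)$'' to a $\Pi^1_{n+1}$ condition on the play: for even $n$, the models $M_n^\sharp(A)$ are $\SIGMA^1_{n+2}$-correct, so iterability statements for their countable approximations sit at the right level, and $\Pi^1_{n+1}$-iterability is exactly the relevant notion introduced in the preliminaries. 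I expect one must run an auxiliary integer game, refereed inside the real game à la the Martin–Steel $n=0$ argument and the games of \cite{AgMu}, so that Player I wins the real game precisely when the witnessing $M$ is $\Pi^1_{n+1}$-iterable.

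The main obstacle, and the heart of the proof, will be getting the complexity of the payoff to come out to $\Pi^1_{n+1}$ rather than something higher. The raw statement ``there exists an $\omega_1$-iterable $M$ witnessing $\varphi$'' is prima facie too complex, so the work is in showing that, because $n$ is even and $M_n(\mathbb{R})$ is $\SIGMA^1_{n+2}$-correct, the existential over witnesses can be folded into Player I's side of a game whose condition on infinite plays is $\Pi^1_{n+1}$. Concretely, I would verify: (i) if $x \in B$, Player I can play a genuine countable $A \in C$, a genuine initial segment $M$, and a genuine witness, and defend its iterability against all of Player II's descent attempts, giving a winning strategy; and (ii) conversely, from a winning strategy for Player I one can extract, using determinacy and correctness, an actual iterable witness inside $M_n(\mathbb{R})$, whence $x \in B$. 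Checking that the iterability-verification subgame has the advertised projective complexity—and that the two directions genuinely match—is where the delicate fine-structural and boldface-pointclass bookkeeping lies; everything else is routine reflection and absoluteness of the kind already used in Lemma \ref{LemmaSigma1Eequiv}.
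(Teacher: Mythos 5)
Your high-level plan is the paper's plan: a Martin--Steel-style model game on reals in which the players jointly enumerate a countable set of reals and a countable approximation to the witnessing initial segment, with the payoff pinned at $\Pi^1_{n+1}$ via $\Pi^1_{n+1}$-iterability (this is the game $G^{\text{even}}_\varphi$ and Lemma~\ref{LemmaGphiLP}). But two of your specific mechanisms would not work as described. First, the iterability check should not be an ``auxiliary integer game, refereed inside the real game'' in which Player II makes descent attempts: $\Pi^1_{n+1}$-iterability in the sense of Steel \cite{St95} is already a first-order $\Pi^1_{n+1}$ condition on a real coding a countable premouse, and the paper simply writes it (universally quantified over countable models of the played theory) into the payoff set; embedding an interactive iteration subgame would change the complexity computation you are trying to protect. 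Relatedly, the condition ``$A \in C$'' cannot appear in the payoff --- membership in a club of $\mathcal{P}_{\omega_1}(\mathbb{R})$ is not projective --- so Lemma~\ref{LemmaClubReals} can only be used in the analysis of a winning strategy, not in the definition of the game. Also, the reason the even case works is not the $\SIGMA^1_{n+2}$-correctness of $M_n^\sharp(A)$ but Steel's comparison lemma \cite[Lemma 3.3]{St95}: for even $n$, a $\Pi^1_{n+1}$-iterable, sound, $n$-small $a$-premouse projecting to $a$ is an initial segment of $M_n^\sharp(a)$.

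The more serious gap is in your direction (ii), which you dismiss as ``routine reflection and absoluteness.'' Extracting a single countably iterable $\mathbb{R}$-premouse from a winning strategy for Player I is the heart of the proof and requires a specific device: Player I plays not a code for a premouse but a complete $\lpm$-theory with constants $\dot x_i$ for the played reals, and the candidate premouse $N_p$ is recovered as the transitive collapse of the definable closure of $\{x_i\}$ in any model of that theory. Because Player II is free to steer all sufficiently late rounds, one shows that $N_p$ depends only on the \emph{set} $p^*$ of reals played and that $p^* \subseteq q^*$ yields a unique elementary embedding $N_p \to N_q$. This produces a countably closed directed system whose direct limit is wellfounded, is an $\mathbb{R}$-premouse satisfying $\varphi$, and is countably iterable because every countable elementary substructure embeds back into some $N_p$ via the inverse of the direct-limit map, whence one pulls back an iteration strategy. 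Without the theory-playing device there is no coherence across plays, so there is no directed system and no way to assemble the local witnesses into one $\mathbb{R}$-premouse; playing ``a putative initial segment $M \lhd M_n(A)$'' directly, as you propose, does not provide this. Once Lemma~\ref{LemmaGphiLP} is in hand, the lemma you are proving follows by applying it to the formula $\exists y\,\varphi(x,y)$ with parameter $x$ and noting that the resulting premouse lines up with $M_n(\mathbb{R})$ by comparison.
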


Lemma \ref{LemmaConverseInclusionEven} is an immediate consequence of Lemma \ref{LemmaGphiLP} below.
Its proof
makes use of model games for premice very similar to the ones in \cite{AgMu}. Here, Players I and II play a sequence of reals and a theory in the language $\mathcal{L}_{pm}(\{\dot x_i:i\in\omega\})$ of premice with added constant symbols $\dot x_i$, for $i\in\omega$. The rules of the game ensure that any model $M$ of the theory contains all reals played during the game and that there is a minimal model, which consists of all elements of $M$ definable from real parameters (the \emph{definable closure} of the reals of $M$).

Fix recursive bijections $m(\cdot)$ and $n(\cdot)$ assigning an odd number
$>1$ to each $\langpm$-formula $\varphi$ such that $m$ and $n$ have
disjoint recursive ranges and for every $\varphi$, $m(\varphi)$ and
$n(\varphi)$ are larger than
$\max\{ i \, \colon \, \dot x_i \text{ occurs in } \varphi\}$. Fix some enumeration
  $(\phi_i \colon i \in \omega)$ of all $\langpm$-formulae such that
  $\dot x_i$ does not appear in $\phi_j$ if $j \leq i$.  Finally, fix a ternary formula $\theta(\cdot,\cdot,\cdot)$ which for any $\mathbb{R}^M$-premouse $M$ uniformly defines the graph of an $\mathbb{R}^M$-indexed family of class-sized wellorders whose union contains every set in $M$ (see Steel \cite[Proposition 2.4.1]{St08b} for an example of such a formula).

\begin{definition}\label{def:Gvarphipsi}
  Let $\varphi$ be a $\lpm$-formula and $n\in\mathbb{N}$ be even; we describe a game
  $G^{\text{even}}_{\varphi}$ of length
  $\omega$ on $\mathbb{R}$. 
  A typical
  run of $G^{\text{even}}_{\varphi}$ looks as follows:
  \[ \begin{array}{c|ccccc} \mathrm{I}  & v_0, x_0 & & v_1,
      x_2 & & \hdots \\ \hline \mathrm{II}  & & x_1 & & x_3 &
      \hdots \end{array} \]
Here, Players I and II take turns, respectively playing reals $(v_i, x_{2i})$ and $x_{2i+1}$, for
    $i \in \omega$. We ask that $v_i \in \{0,1\}$.

  Here $v_i$ will be interpreted as the truth value of the formula
  $\phi_i$ from the enumeration fixed above. This can be thought of as
  Player I either accepting or rejecting the formula $\phi_i$. If so,
  the play determines a complete theory $T$ in the language $\langpm$.

  Player I wins the game $G^{\text{even}}_{\varphi}$ if, and only if,
  \begin{enumerate}
  \item \label{rule:reals} For each $i \in \omega$, $T$ contains the
    sentence $\dot x_i \in \mathbb{R}$ and, moreover, for each
    $j,m \in \omega$, $T$ contains the sentence $\dot x_i(m) = j$ if,
    and only if, $x_i(m) = j$.
  \item \label{rule:dc} For every formula $\phi(x)$ with one free
    variable in the language $\langpm$, and $m(\phi)$ and
    $n(\phi)$ as fixed above, $T$ contains the statements
    \[ \exists x\, \phi(x) \to \exists x\, \exists \alpha \,
      (\phi(x) \land \theta(\alpha, \dot x_{m(\phi)}, x)), \]
    \[ \exists x\, (\phi(x) \land x \in \dot X) \to \phi(\dot
      x_{n(\phi)}). \]
    % [This will ensure that the definable closure of
    % $\{x_i : i \in \omega\}$ in every model $\cM$ of $T$ is an
    % elementary submodel of $\cM$.]
  \item \label{rule:pm} $T$ is a complete, consistent theory such that
    for every countable model $\cM$ of $T$ and every model $\cN^*$
    which is the definable closure of $\{x_i : i < \omega\}$ in
    $\cM \upharpoonright \lpm$, $\cN^*$ is well-founded, and if $\cN$
    denotes the transitive collapse of $\cN^*$,
    \begin{enumerate}
    \item $\cN$ is a sound and solid $n$-small $X$-premouse, where
      $X = \{x_i \colon i \in \omega\}$,
    \item $\cN$ is $\Pi^1_{n+1}$-iterable (in the sense of \cite[Definition
      1.6]{St95}), and
    \item \label{rule:pmPsi} $\cN \vDash \varphi$, but no proper initial segment of $\cN$ satisfies $\varphi$.
  \end{enumerate}
\end{enumerate}
If Player I plays according to all these rules, he wins the game. In
this case there is a unique premouse $\cN_p$ as in (\ref{rule:pm})
associated to the play $p = (v_0,x_0, x_1, v_1, x_2, \dots)$ of
the game. Otherwise, Player II wins.
\end{definition}

Observe that the winning set for Player I in the game $G^{\text{even}}_{\phi}$ is $\Pi^1_{n+1}$.
Motivated by the definition of $G^{\text{even}}_{\varphi}$ and the argument of \cite[Theorem 3.1]{AgMu}, we introduce the following definition for the proof:
\begin{definition}
Let $a\in\mathcal{P}_{\omega_1}(\mathbb{R})$ and $\varphi$ be a formula in the language $\lpm$.
We say that an $a$-premouse is a \emph{$\varphi$-witness} if it satisfies $\varphi$ and a \emph{minimal $\varphi$-witness} if, in addition, it has no proper initial segment satisfying $\varphi$.
\end{definition}

\begin{lemma}\label{LemmaProjectsToR}
If an $a$-premouse $M$ satisfies a formula $\varphi$ but no proper initial segment of $M$ satisfies $\varphi$, then $J_1(M)$ projects to $a$.
\end{lemma}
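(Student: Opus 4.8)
The plan is to show that $J_1(M)$ coincides with its own $\Sigma_1$-hull of the reals $a$ together with finitely many parameters, and to read off from this the surjection witnessing that $J_1(M)$ projects to $a$. The reason for passing to $J_1(M)$ rather than working inside $M$ directly is that $\varphi$ is an arbitrary $\lpm$-formula of unbounded complexity, whereas over $J_1(M)$ the set $M$ and all of its initial segments $M|\xi$ are \emph{elements}, so the satisfaction relation ``$M|\xi \models \varphi$'' becomes a uniformly $\Delta_1$ predicate. Consequently the statement ``$y$ is an initial segment of the extender sequence, $y \models \varphi$, and no proper initial segment of $y$ satisfies $\varphi$'' is $\Sigma_1$ over $J_1(M)$ in the parameter $a$, and $M$ is its unique witness; thus $M$ is $\Sigma_1$-definable over $J_1(M)$ from $a$.

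First I would form $H = \mathrm{cHull}_1^{J_1(M)}(a \cup \{p\})$, the transitive collapse of the $\Sigma_1$-Skolem hull in $J_1(M)$ of $a$ together with the standard parameter $p = p_1(J_1(M))$. Appealing to the condensation lemma for (relativized, fine-structural) premice from Steel \cite{St10}, $H$ is of the form $J_1(\bar M)$ for some initial segment $\bar M \trianglelefteq M$. Since the defining property of $M$ recorded above is $\Sigma_1$ over $J_1(M)$ and $a \subseteq H$, it is preserved under the $\Sigma_1$-elementary inverse of the collapse, so the image $\bar M$ of $M$ in $H$ is again a minimal $\varphi$-witness; in particular $\bar M \models \varphi$.

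The key step is then the appeal to minimality: $\bar M \trianglelefteq M$ is an initial segment of $M$ satisfying $\varphi$, while by hypothesis $M$ has no \emph{proper} initial segment satisfying $\varphi$, so $\bar M = M$. Hence no ordinals of $M$ are moved by the collapse, $H = J_1(M)$, and the hull $\mathrm{Hull}_1^{J_1(M)}(a \cup \{p\})$ was already all of $J_1(M)$. This yields a $\Sigma_1(J_1(M))$ partial surjection from $a$ onto $J_1(M)$, so $\rho_1(J_1(M)) \leq a$; since $a$ is the base predicate $\dot X$ and is never collapsed, we also have $\rho_1(J_1(M)) \geq a$, and therefore $J_1(M)$ projects to $a$.

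I expect the main obstacle to be the condensation step: one must check that the relevant condensation lemma applies to $H$ in the setting of $a$-premice built over a set of reals (rather than the standard setting over $\omega$), that $H$ genuinely collapses to $J_1(\bar M)$ for an \emph{initial segment} $\bar M$ of $M$ and not merely to a structure that $\Sigma_1$-embeds into $M$, and that soundness and solidity of $M$ (guaranteed in the game by rule~(\ref{rule:pm})(a)) are invoked where the fine structure requires them. Care is also needed to confirm that the $\varphi$-witness property, as evaluated over $J_1(M)$, is genuinely $\Sigma_1$ and therefore reflects into $H$; this is precisely where the extra rudimentary closure afforded by passing from $M$ to $J_1(M)$ does the essential work.
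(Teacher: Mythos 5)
Your argument is essentially the paper's: the entire published proof is the one-line observation that $J_1(M)$ satisfies a $\Sigma_1$ fact (namely, ``some proper initial segment satisfies $\varphi$'') that holds in none of its proper initial segments, and your hull--collapse--condensation argument is exactly the standard unwinding of why such a new $\Sigma_1$ truth forces $\rho_1(J_1(M))$ down to $a$. The only differences are cosmetic: the paper's new $\Sigma_1$ sentence is the bare existential ``some initial segment satisfies $\varphi$'' (with minimality of $\bar M$ applied externally, exactly as in your key step), which sidesteps your worry about rendering the minimality clause $\Sigma_1$; and the condensation step you rightly flag as the crux is precisely what the paper leaves implicit, it being harmless there because every premouse to which the lemma is applied is $\Pi^1_{n+1}$-iterable and, by the rules of the games, the definable closure of its reals.
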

\begin{proof}
This is because $J_1(M)$ satisfies a $\Sigma_1$ fact that holds in none of its proper initial segments (viz. the fact that some proper initial segment satisfies $\varphi$).
\end{proof}

\begin{lemma}\label{LemmaGphiLP}
Let $n$ be even and assume either $\PI^1_{n+1}$-determinacy for games on reals or that $M_n^\sharp(\mathbb{R})$ exists. Then,
Player I has a winning strategy in $G^{\text{even}}_{\varphi}$ if, and only if, there is a sound, countably iterable, $n$-small $\mathbb{R}$-premouse satisfying $\varphi$.
\end{lemma}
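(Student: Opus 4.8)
The plan is to prove the two implications separately, in both cases reducing to \emph{minimal} $\varphi$-witnesses and relying on the following correctness fact: since $n$ is even and, under either hypothesis, the reals are closed under $x\mapsto M_{n-1}^\sharp(x)$, an $n$-small $X$-premouse that is $\Pi^1_{n+1}$-iterable is genuinely $\omega_1$-iterable, and among $\omega_1$-iterable $X$-premice the minimal $\varphi$-witness over a given $X$ is unique (by comparison). I will also invoke Lemma \ref{LemmaClubReals} to obtain a club of $A\in\mathcal{P}_{\omega_1}(\mathbb{R})$ with $M_n^\sharp(A)\cap\mathbb{R}=A$, so that the countable sets of reals arising as move-sets in a run carry well-behaved premice.

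For the direction from right to left, suppose $N$ is a sound, countably iterable, $n$-small $\mathbb{R}$-premouse with $N\models\varphi$; passing to its least initial segment satisfying $\varphi$, I may assume $N$ is a minimal $\varphi$-witness (so that, by Lemma \ref{LemmaProjectsToR}, $J_1(N)$ projects to its reals). I then describe a strategy for Player I in $G^{\text{even}}_{\varphi}$: as the players enumerate reals, let $X$ be the set of reals played so far and let $\cN^*$ be the definable closure of $X$ inside $N$; Player I commits to the truth value $v_i$ of $\phi_i$ by reading it off $N$—equivalently, off $\cN^*$ by elementarity—which is legitimate because the enumeration $(\phi_i)$ guarantees that $\phi_i$ mentions only constants $\dot x_j$ with $j<i$, all already played, and he arranges the remaining play so as to meet the definable-closure requirements of rule (\ref{rule:dc}), exactly as in \cite[Theorem 3.1]{AgMu}. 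At the end of a run the resulting theory is $\mathrm{Th}(\cN^*,X)$, whose minimal model collapses to the definable closure $\cN$ of $X$ in $N$; this $\cN$ is a minimal $\varphi$-witness over $X$, and being the transitive collapse of a countable elementary substructure of $N$ it is $\omega_1$-iterable by countable iterability of $N$, hence $\Pi^1_{n+1}$-iterable. Thus every clause of Definition \ref{def:Gvarphipsi} holds and Player I wins.

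For the direction from left to right, suppose $\sigma$ is a winning strategy for Player I. The idea is to run $\sigma$ against an opponent enumerating all of $\mathbb{R}$ and read off a premouse over $\mathbb{R}$. Concretely, I force with $\coll(\omega,\mathbb{R})$ and, in $V[g]$, let Player II enumerate $\mathbb{R}^V$ while Player I follows $\sigma$; since $\sigma$ is winning and the payoff is $\Pi^1_{n+1}$, the run produces a minimal $\varphi$-witness $\cN$ over $\mathbb{R}^V$ that is sound, $n$-small, and $\Pi^1_{n+1}$-iterable. (Alternatively, staying inside $V$, for each countable $X\subseteq\mathbb{R}$ one runs $\sigma$ against an enumeration of $X$ to obtain the unique $\omega_1$-iterable minimal $\varphi$-witness $\cN_X$ over $X$; comparison and uniqueness turn $X\mapsto\cN_X$ into a directed system, whose direct limit over $\mathcal{P}_{\omega_1}(\mathbb{R})$ is the desired $\mathbb{R}$-premouse.) It then remains to see that $\cN$ lies in $V$ and is countably iterable there: the former follows from the homogeneity of $\coll(\omega,\mathbb{R})$ and the uniqueness of the iterable minimal witness over $\mathbb{R}^V$, and the latter because any countable elementary substructure of $\cN$ collapses to a countable $n$-small $\Pi^1_{n+1}$-iterable premouse—a projective property absolute between $V$ and $V[g]$—which is therefore genuinely iterable by the correctness fact above.

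The main obstacle is this left-to-right direction. One must glue the countably many local outputs of $\sigma$ into a single $\mathbb{R}$-premouse and verify its \emph{countable} iterability, and I expect the real work to lie in two places: first, in checking that $\sigma$ remains winning against the (generic, or cofinally chosen) enumerations of $\mathbb{R}$, which rests on the correctness and absoluteness afforded by the hypotheses; and second, in showing that the amalgamation of the iterable pieces $\cN_X$ is itself iterable, for which the uniqueness of iterable minimal witnesses and the comparison theorem are the essential tools.
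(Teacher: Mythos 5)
Your right-to-left direction is essentially the paper's argument (pass to a minimal $\varphi$-witness, have Player I play its theory while supplying the witnesses demanded by rule (\ref{rule:dc})), and it is fine. The left-to-right direction, however, has a genuine gap in both of the routes you sketch, and it sits exactly where you yourself flag the ``real work.'' In the forcing version, the assertion that the generic run is won ``since $\sigma$ is winning and the payoff is $\Pi^1_{n+1}$'' is not justified: $\sigma$ being winning in $V$ is a statement about runs that exist in $V$, whereas the run against the generic enumeration of $\mathbb{R}^V$ is a new element of $(\mathbb{R}^V)^\omega$ in $V[g]$. Transferring ``every branch through the tree of $\sigma$-plays lies in the payoff'' to $V[g]$ would require a tree representation of the $\Pi^1_{n+1}$ payoff that is absolute for $\coll(\omega,\mathbb{R})$, which is not available from the hypotheses as stated. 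The paper avoids this entirely: it only runs $\sigma$ against enumerations of $a=\mathbb{R}^W$ for $W$ the transitive collapse of a countable hull containing $\sigma$, so that every finite position lies in $W$ but the full run lies in $V$, where ``$\sigma$ is winning'' applies literally. This yields witnesses $N_p$ only for a club of $a\in\mathcal{P}_{\omega_1}(\mathbb{R})$, which is all that is needed.

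The second, more serious gap is in your alternative route: ``comparison and uniqueness turn $X\mapsto\cN_X$ into a directed system.'' Comparison is a tool for premice over the \emph{same} set; it cannot produce an elementary embedding from an $X$-premouse into a $Y$-premouse when $X\subsetneq Y$, and uniqueness of the minimal witness over each fixed $X$ does not give you maps between different levels. The paper obtains these embeddings by a game-theoretic coherence argument going back to Martin--Steel \cite{MaSt08}: if two $\sigma$-plays $p$ and $q$ with $p^*\subseteq q^*$ assigned different truth values to corresponding formulas, Player II could extend both positions so that the same reals are played in each, producing two distinct minimal witnesses over the same set of reals --- contradicting uniqueness. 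It is this coherence of the truth values played by $\sigma$, not comparison, that defines the (unique) embeddings $N_p\to N_q$ and hence the directed system whose limit is the desired $\mathbb{R}$-premouse. Relatedly, the countable iterability of the direct limit $M$ is verified in the paper by noting that every element of $M$ is definable from a real parameter, so a countable hull of $M$ embeds back into some $N_p$ with $p^*\in C$ and inherits $\omega_1$-iterability by pullback; this step also has no counterpart in your sketch once the forcing route is abandoned. Without the coherence argument your construction of the global premouse does not get off the ground.
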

\begin{proof}
Both of the possible hypotheses imply $\PI^1_{n+1}$-determinacy for games on $\mathbb{N}$, which implies that the conclusion of the Comparison Theorem holds for every $a \in \mathcal{P}_{\omega_1}(\mathbb{R})$ and every pair of countable $n$-small, $\omega_1$-iterable $a$-premice with no Woodin cardinals or which project to $a$ (see e.g., \cite{MSW} for a proof of this).

If there is a premouse as in the statement of the lemma, then there is a least one, say $M$ (this follows from countable iterability, the remark just given, and Lemma \ref{LemmaProjectsToR}). Player I can win $G^{\text{even}}_{\varphi}$ by playing the theory of $M$.

The converse implication is proved by an argument similar to the one of \cite[Theorem 3.1]{AgMu}, but there are a few differences, so we include the proof for the reader's convenience. The idea for the proof is to assume that Player I has a winning strategy $\sigma$ for $G^{\text{even}}_{\varphi}$ and use the strategy to construct many $A$-premice, for various $A$, which will then be joined together into an $\mathbb{R}$-premice. Because $\sigma$ is a winning strategy, the $A$-premice will have the properties that we desire of the $\mathbb{R}$-premice, and these will transfer by elementarity. This idea of ``joining together'' $A$-premice to produce an $\mathbb{R}$-premouse comes from Martin-Steel \cite[Lemma 3]{MaSt08}.

On to the proof. Suppose Player I has a winning strategy $\sigma$.  We claim that there is a closed, cofinal $C\subseteq\mathcal{P}_{\omega_1}(\mathbb{R})$ such that for all $a\in C$ some sound $\Pi^1_{n+1}$-iterable, $n$-small $a$-premouse satisfies $\varphi$.
To see this, we argue as in the proof of \cite[Lemma 3.8, Case 1]{AgMu}: let $W$ be the transitive collapse of a countable elementary substructure $Y$ of some large $V_\kappa$ such that $\sigma \in Y$. It suffices to show that such a premouse exists for $a =\mathbb{R}^W$. Letting $\bar\sigma$ be the preimage of $\sigma$ under the collapse embedding, we have
\[W\models \text{``$\bar\sigma$ is a winning strategy for I in $G_\varphi^{\text{even}}$.''}\]
Let $h$ be a wellordering of $a$ in order-type $\omega$ and consider the play $p$ of $G_\varphi^{\text{even}}$ in which Player II enumerates $h$ and Player I responds according to $\sigma$. All proper initial segments of this play belong to $W$, since $\bar\sigma \in W$ and $\bar\sigma$ agrees with $\sigma$ on its domain. It follows that the reals played in $p$ are exactly those in $a$, and so, since $\sigma$ is a winning strategy, $p$ determines a $\Pi^1_{n+1}$-iterable $a$-premouse as desired. As claimed, there is a closed, cofinal $C\subseteq\mathcal{P}_{\omega_1}(\mathbb{R})$ such that for all $a\in C$ some sound $\Pi^1_{n+1}$-iterable, $n$-small $a$-premouse satisfies $\varphi$. Moreover, each of these $a$-premice has an initial segment none of whose proper initial segments satisfies $\varphi$.

Suppose $M$ is such an $a$-premouse. Then, $J_1(M)$ projects to $a$ by Lemma \ref{LemmaProjectsToR}. 
By Steel \cite[Lemma 3.3]{St95}, it is thus an initial segment of $M_n^\sharp(a)$ (this is where we use that $n$ is even). Thus, for each $a \in C$, there is some $M\lhd M_n^\sharp(a)$ which satisfies $\varphi$. Appealing to Lemma \ref{LemmaClubReals} (this is where the assumption of either $\PI^1_{n+1}$-determinacy for games on reals or the existence of $M_n^\sharp(\mathbb{R})$ is used), and refining $C$ if necessary, we may assume that
\[\mathbb{R}\cap M_n^\sharp(a) = a\]
for all $a \in C$.

For the remainder of this proof, if $p$ is a sequence of reals, we denote by $p^*$ the set of all reals appearing in $p$.
Now, if $p$ is a countable sequence of reals, $\sigma$ applied to $p$ yields a (possibly larger) countable sequence of reals and a theory $T_p$ whose minimal model is a $\Pi^1_{n+1}$-iterable, sound, minimal $\varphi$-witness $N_p$. From the conclusion of the previous paragraph follows that by replacing $\varphi$ with the formula 
\[\varphi^* = \text{``}\dot X = \mathbb{R} \wedge \varphi\text{''}\]
and restricting the choice of $p$ to elements of
\[\{p : p^* \in C\},\]
we may assume that $N_p$ is a $\mathbb{R}^{N_p}$-premouse, $\mathbb{R}^{N_p} = p^*$, and  some $M\lhd M_n^\sharp(p^*)$ is a $\varphi$-witness.
Suppose $p$ and $q$ are two sequences of reals obtained via plays of $G^{\text{even}}_{\varphi}$ consistent with $\sigma$ and that $p^*, q^* \in C$. By \cite[Lemma 3.4]{AgMu}, $N_p$ and $N_q$ are $\omega_1$-iterable. We observe some facts about them:
\begin{claim}\label{ClaimLemmaGphiLP}
The sequences $p$ and $q$ have the following properties:
\begin{enumerate}
\item \label{ClaimLemmaGphiLP1} $J_1(N_p)$ projects to $p^*$. 
%\item \label{ClaimLemmaGphiLP2} $N_p$ is a initial segment of $M_n^\sharp(p^*)$. 
\item \label{ClaimLemmaGphiLP3} If $p^* = q^*$, then $N_p = N_q$.
\item \label{ClaimLemmaGphiLP4} If $p^* \subseteq q^*$, then there is an elementary embedding from $N_p$ into $N_q$; moreover, the embedding is unique.
\end{enumerate}
\end{claim}
\begin{proof}
\eqref{ClaimLemmaGphiLP1} follows from Lemma \ref{LemmaProjectsToR}, and the fact that $N_p$ is a minimal $\varphi$-witness.
%Otherwise, we have 
%\[\mathbb{R}^{L(N_p)} = \mathbb{R}^{N_p}.\]
%However, $L(N_p)\models$ ``there is an initial segment of me which is a $\phi$-witness.'' Since the existence of a $\phi$-witness is $\Sigma_1$ and
%\[L_{\omega_1^{L(N_p)}}(N_p)\prec_1 L(N_p),\]
%the least $\phi$-witness is countable in $L(N_p)$ and thus belongs to $N_p$, which is impossible.
%Item \eqref{ClaimLemmaGphiLP2} follows from \eqref{ClaimLemmaGphiLP1}: since $N_p$ is sound, $J_1(N_p)$ is also a $p^*$-premouse. Let $P$ be the least initial segment of $M_n^\sharp(p^*)$ which projects to $P^*$ and has an initial segment which is a $\varphi$-witness. Since they are both countable $n$-small, $\omega_1$-iterable $p^*$-premice which project to $p^*$, we may apply the conclusion of the comparison theorem to $J_1(N_p)$ and $P$ to see that
%\[J_1(N_p) \trianglelefteq P,\]
%and thus $N_p \lhd M_n^\sharp(p^*)$.
\eqref{ClaimLemmaGphiLP3} is immediate from the minimality property of $N_p$ and $N_q$ and noting that, by \eqref{ClaimLemmaGphiLP1}, $J_1(N_p)$ and $J_1(N_q)$ must be equal.
The proof of \eqref{ClaimLemmaGphiLP4} is as in Martin-Steel \cite{MaSt08}; we sketch it:

Suppose otherwise and let $(p,v)$ and $(q,w)$ be plays with real parts $p$ and $q$, respectively (i.e., $v$ and $w$ denote the sequences of truth values played by Player I), such that $(p,v)$ and $(q,w)$ are consistent with $\sigma$, but there is no elementary embedding between $N_p$ and $N_q$. Choose $m$ large enough such that for some formula $\chi(\dot x_0, \hdots, \dot x_i)$,
\begin{enumerate}
\item there are numbers $k_0, k_1,\hdots, k_i\leq m$ such that for each $j\leq i$, the $j$th real of $p$ is the $k_j$th real of $q$, and
\item the formulas $\chi(\dot x_0, \hdots, \dot x_i)$ and $\chi(\dot x_{k_0}, \hdots, \dot x_{k_i})$ are assigned different truth values according to $v$ and $w$.
\end{enumerate}
Since in all turns after the $m$th, Player II is free to play whatever she desires, one can find extensions $(p', v')$ and $(q',w')$ of $(p\upharpoonright m, v\upharpoonright m)$ and $(q\upharpoonright m, w\upharpoonright m)$ in which the same reals have been played. Since the formulas $\chi(\dot x_0, \hdots, \dot x_i)$ and $\chi(\dot x_{k_0}, \hdots, \dot x_{k_i})$ were assigned different truth values, the models corresponding to $(p',v')$ and $(q',w')$ must be different, which contradicts
 \eqref{ClaimLemmaGphiLP3}.
\end{proof}

Now, let $\mathcal D$ be the directed system of all models of the form $N_p$, where $p$ is as above, together with the embeddings given by the claim. The directed system is countably closed, in the sense that for  every countable subset $\mathcal{D}_0$ of $\mathcal{D}$, there is a fixed $N_0 \in \mathcal{D}$ into which every member of $\mathcal{D}_0$ embeds elementarily; it follows that $\mathcal{D}$ has a wellfounded direct limit which we identify with its transitive collapse $M$. Write
\[j_p: N_p \to M\]
for the direct limit elementary embedding.
%, so that, letting $N = j_p[N_p]$ ($=$ the pointwise image of $N_p$ under $j_p$),
%\[j_p^{-1}\upharpoonright N: N \to N_p\]
%is also elementary for each $p$ as above.
By elementarity, $M$ is an $n$-small $\mathbb{R}^M$-premouse satisfying $\varphi$; because the directed system has members containing arbitrary reals, $\mathbb{R}^M = \mathbb{R}$. 
It remains to show that $M$ is countably iterable. We use the fact that, by elementarity, every element of $M$ is  definable over $M$ from a real parameter in $M$.\footnote{By this, we mean that for every $a\in M$, there is a first-order formula $\chi$ in the language $\lpm$ and a real number $x$ in $M$ such that $a$ is the unique element of $M$ satisfying $\chi(a,x)$ in $M$.}

Let $\bar M$ be a countable elementary substructure of $M$. Since every element of $M$ is definable over $M$ from a real parameter, we may assume without loss of generality that
\[\bar M = \{a \in M : a \text{ is definable over $M$ from a parameter in }\bar M \cap \mathbb{R}\}.\]
Let $a\in C$ be such that $\mathbb{R}^{\bar M}\subseteq a$ and $a$ is the set of reals played in a run $p$ of $G^{\text{even}}_{\varphi}$ consistent with $\sigma$ (this $a$ exists e.g., by the argument for the existence of $C$ we gave earlier). 
Since 
\[j_p: N_p \to M\] 
is elementary, $\mathbb{R}^{\bar M}\subseteq a = j_p[a]$ ($=$ the pointwise image of $a$ under $j_p$), and every element of $\bar M$ is definable over $\bar M$ from a real parameter in $\bar M$,
we have $\bar M\subseteq j_p[N_p]$,  and the embedding
\[j_p^{-1}\upharpoonright \bar M: \bar M \to N_p\]
is elementary. Therefore, the transitive collapse of $\bar M$ is $\omega_1$-iterable by the pullback strategy. This proves the lemma.
\end{proof}

Moving on to odd $n$, we show:

\begin{lemma}\label{LemmaConverseInclusionODD}
Let $n\in\mathbb{N}$ be odd. Suppose that $M_n^\sharp(\mathbb{R})$ exists. Then,
\[\mathcal{P}(\mathbb{R}) \cap \Sigma_1^{M_n(\mathbb{R})} \subseteq \Game^\mathbb{R}\Sigma^1_{n+1}.\]
\end{lemma}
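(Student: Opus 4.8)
The plan is to mirror the structure of the even case (Lemmas \ref{LemmaConverseInclusionEven} and \ref{LemmaGphiLP}) but adapt the model game and the fine-structural input to reflect the parity change. The key difference is correctness: when $n$ is odd, $M_n^\sharp(a)$ is only $\SIGMA^1_{n+1}$-correct, and it computes $\SIGMA^1_{n+2}$-truth by forcing over its collapse algebra. Thus whereas in the even case a $\Sigma_1^{M_n(\mathbb{R})}$ set was captured by the existence of an initial-segment witness whose iterability was $\Pi^1_{n+1}$, in the odd case the natural witnessing notion sits one level higher and the relevant game quantifier should land in $\Game^\mathbb{R}\Sigma^1_{n+1}$ rather than $\Game^\mathbb{R}\Pi^1_{n+1}$.

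First I would isolate the analogue of Lemma \ref{LemmaGphiLP}: introduce a variant model game $G^{\text{odd}}_{\varphi}$ on $\mathbb{R}$ in which Player I again plays reals together with truth values for the enumerated formulae, building a complete theory $T$ whose minimal model $\cN$ is required to be a sound, $n$-small $\mathbb{R}^{\cN}$-premouse that is a minimal $\varphi$-witness. The crucial modification is the iterability clause in rule (\ref{rule:pm}): instead of demanding $\Pi^1_{n+1}$-iterability, one demands $\Sigma^1_{n+1}$-iterability (equivalently, $\Pi^1_{n}$-iterability together with a $\Sigma^1_{n+1}$ genericity/correctness condition), so that the winning set for Player I becomes $\Sigma^1_{n+1}$ rather than $\Pi^1_{n+1}$. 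I would then establish the odd-$n$ counterpart of Lemma \ref{LemmaGphiLP}: Player I wins $G^{\text{odd}}_{\varphi}$ iff there is a sound, countably iterable, $n$-small $\mathbb{R}$-premouse satisfying $\varphi$. The forward direction (build a witness from a winning strategy) proceeds exactly as in the even case via the Martin--Steel directed-system argument: construct $a$-premice $N_p$ for $a$ ranging over a club $C\subseteq\mathcal{P}_{\omega_1}(\mathbb{R})$, use Claim \ref{ClaimLemmaGphiLP} to obtain unique elementary embeddings, and take a countably closed direct limit $M$ with $\mathbb{R}^M = \mathbb{R}$, verifying countable iterability by the pullback strategy as before.

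The one genuinely parity-sensitive step is the invocation of Steel's fine-structural comparison input. In the even case the argument used Steel \cite[Lemma 3.3]{St95} to conclude that a sound $\Pi^1_{n+1}$-iterable, $n$-small $a$-premouse projecting to $a$ is an initial segment of $M_n^\sharp(a)$. For odd $n$ this is precisely where one must substitute the odd-indexed version of that lemma, using that $M_n^\sharp(a)$ is $\SIGMA^1_{n+1}$-correct and that $\Sigma^1_{n+1}$-iterability is the right hypothesis to place a minimal $\varphi$-witness below $M_n^\sharp(a)$. This is the step I expect to be the main obstacle, since the correct iterability notion and the correct comparison lemma must be matched so that the payoff stays $\Sigma^1_{n+1}$ while still yielding a genuine initial segment of $M_n^\sharp(a)$; getting the level-counting right here is exactly what enforces the shift from $\Pi^1_{n+1}$ to $\Sigma^1_{n+1}$ in the statement.

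Finally, granting the odd analogue of Lemma \ref{LemmaGphiLP}, Lemma \ref{LemmaConverseInclusionODD} follows as in the even case. Given $B\in\mathcal{P}(\mathbb{R})\cap\Sigma_1^{M_n(\mathbb{R})}$, write membership $x\in B$ as $M_n(\mathbb{R})\models\exists y\,\psi(x,y)$ for some $\Sigma_0$ formula $\psi$; by reflection this is witnessed by some $M\lhd M_n(\mathbb{R})$, and since every element of $M_n(\mathbb{R})$ is definable from a real, one may take $M$ to be a minimal $\varphi_x$-witness for an appropriate formula $\varphi_x$ depending recursively on $x$. Applying the odd analogue of Lemma \ref{LemmaGphiLP}, the existence of such a countably iterable $n$-small $\mathbb{R}$-premouse is equivalent to Player I having a winning strategy in the $\Sigma^1_{n+1}$ game $G^{\text{odd}}_{\varphi_x}$ on $\mathbb{R}$, uniformly in $x$. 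Hence $B = \{x : \text{Player I wins } G^{\text{odd}}_{\varphi_x}\}\in\Game^\mathbb{R}\Sigma^1_{n+1}$, as required.
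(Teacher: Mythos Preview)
Your proposal has a genuine gap at precisely the point you yourself flag as ``the main obstacle.'' You propose to keep Player~I in the role of building an $n$-small minimal $\varphi$-witness and to weaken the iterability clause from $\Pi^1_{n+1}$ to something expressible by a $\Sigma^1_{n+1}$ condition, so that Player~I's payoff becomes $\Sigma^1_{n+1}$. But there is no odd-$n$ analogue of Steel's Lemma~3.3 of the form ``a sound $n$-small premouse with such-and-such $\Sigma^1_{n+1}$-level iterability and projecting to $a$ is an initial segment of $M_n^\sharp(a)$''; the relevant odd-$n$ comparison input is Steel's Lemma~3.1, which goes the other way (it places an iterable $n$-small mouse below a $\Pi^1_{n+1}$-iterable \emph{non}-$n$-small one). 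Moreover, if you only demand $\Pi^1_n$-iterability of the $N_p$, the step upgrading $N_p$ to $\omega_1$-iterability (used to run the comparison arguments behind Claim~\ref{ClaimLemmaGphiLP}) breaks down, so the directed-system construction does not go through either.

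The paper's argument is structurally different from what you outline. In $G^{\text{odd}}_{\psi}$, Player~I is asked to produce the theory of a sound $\Pi^1_{n+1}$-iterable premouse that is \emph{not} $n$-small (with all proper initial segments $n$-small) and that has \emph{no} proper initial segment satisfying $\psi$. Thus Player~I's payoff remains $\Pi^1_{n+1}$, and it is Player~II's payoff that is $\Sigma^1_{n+1}$; one then shows that Player~II wins $G^{\text{odd}}_{\psi}$ iff some sound countably iterable $n$-small $\mathbb{R}$-premouse satisfies $\psi$. The forward direction uses Steel's Lemma~3.1 (odd $n$) to see that any candidate Player~I might describe must already contain the minimal $\psi$-witness; for the converse, one lets Player~I play the theory of $M_n^\sharp(\mathbb{R})$ itself (which \emph{is} $\Pi^1_{n+1}$-iterable when $n$ is odd) against Player~II's winning strategy, and reads off an initial segment of $M_n^\sharp(\mathbb{R})$ satisfying $\psi$ directly---no directed-system limit is needed. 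The role reversal, together with replacing ``$n$-small witness'' by ``non-$n$-small anti-witness,'' is exactly the missing idea.
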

The argument here is similar to the preceding one, but there are a few differences. 
We will employ a game $G^{\text{odd}}_{\psi}$ similar to $G^{\text{even}}_{\varphi}$, but engineered so that an appropriate $\mathbb{R}$-premouse can be constructed from a winning strategy for Player II, and not for Player I. The key difference is that the game will require Player I to play the theory of a premouse which is \emph{not} $n$-small. While in the even case we constructed the $\mathbb{R}$-premouse ``from below'' using the small premice given by the strategy, in this case we will instead find the premouse ``from above,'' as an initial segment of $M_n^\sharp(\mathbb{R})$. 

\begin{definition}
Let $\psi$ be a formula in the language $\lpm$ and $n$ be odd. The game $G^{\text{odd}}_{\psi}$  is defined just like $G^{\text{even}}_{\varphi}$, except that 
condition \eqref{rule:pm} is replaced by:
\begin{enumerate}
\item[(3')] \label{Oddrule:pm} $T$ is a complete, consistent theory such that
    for every countable model $\cM$ of $T$ and every model $\cN^*$
    which is the definable closure of $\{x_i : i < \omega\}$ in
    $\cM \upharpoonright \lpm$, $\cN^*$ is well-founded, and if $\cN$
    denotes the transitive collapse of $\cN^*$,
    \begin{enumerate}
    \item $\cN$ is a sound and solid $X$-premouse which is not $n$-small, but all of whose proper initial segments are $n$-small, where
      $X = \{x_i \colon i \in \omega\}$,
    \item $\cN$ is $\Pi^1_{n+1}$-iterable (in the sense of \cite[Definition
      1.4]{St95}),\footnote{It should be pointed out that this iterability assumption in general does not suffice for the conclusion of the comparison theorem to hold between $\cN$ and an $n$-small iterable premouse if $\cN$ is not $n$-small.} and
    \item \label{Oddrule:pmPsi} $\cN$ does not have a proper initial segment satisfying $\psi$.
  \end{enumerate}
\end{enumerate}
\end{definition}

The canonical example of a model satisfying the first two conditions is $M_n^\sharp$, and this requires that $n$ be odd (e.g., $M_2^\sharp$ is not $\Pi^1_3$-iterable; as remarked by Steel \cite[p. 95]{St95}).
Observe that if an $\mathbb{R}^N$-premouse $N$ satisfies the first condition, then it projects to $\mathbb{R}^N$. The winning set for Player I in the game $G^{\text{odd}}_{\psi}$ is $\Pi^1_{n+1}$; hence, the winning set for Player II is $\Sigma^1_{n+1}$. To prove Lemma  \ref{LemmaConverseInclusionODD}, it suffices to show:

\begin{lemma}\label{LemmaHphi}
Let $n$ be odd and assume that $M_n^\sharp(\mathbb{R})$ exists. Then, Player II has a winning strategy in $G^{\text{odd}}_{\psi}$ if, and only if, there is a sound, countably iterable $n$-small $\mathbb{R}$-premouse satisfying $\psi$.
\end{lemma}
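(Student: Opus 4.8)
The plan is to prove both implications, paralleling the proof of Lemma~\ref{LemmaGphiLP} but interchanging the roles of the two players and the directions ``from below''/``from above'', as indicated in the discussion preceding the statement. Throughout I would write $M_X$ for the least proper initial segment of $M_n^\sharp(X)$ satisfying $\psi$, when it exists; by Lemma~\ref{LemmaProjectsToR} and minimality $J_1(M_X)$ projects to $X$, and $M_X$ is a sound, $n$-small, $\omega_1$-iterable $X$-premouse, being a proper initial segment of $M_n^\sharp(X)$. I would also fix, via Lemma~\ref{LemmaClubReals}, a club $C\subseteq\mathcal{P}_{\omega_1}(\mathbb{R})$ of sets $a$ with $M_n^\sharp(a)\cap\mathbb{R}=a$, each of the form $a=\mathbb{R}^{W_a}$ for a countable $W_a\prec V_\kappa$ with $\kappa$ large.

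For the direction from a winning strategy to a premouse, suppose Player II has a winning strategy $\tau$, and fix $a\in C$ with $\tau\in W_a$. I would analyze the run of $G^{\text{odd}}_\psi$ in which Player I enumerates $a$ and plays the truth values determined by $M_n^\sharp(a)$, while Player II follows $\tau$. Exactly as in the even case, since $\bar\tau\in W_a$ agrees with $\tau$ on moves in $W_a$, the run stays inside $W_a$, the reals played are exactly $a$, and (using rules~\eqref{rule:reals} and~\eqref{rule:dc} together with the definable-wellorder formula $\theta$) the premouse described by Player I is precisely $M_n^\sharp(a)$. This premouse satisfies (3'a) and (3'b): it is sound, solid, not $n$-small, has only $n$-small proper segments, and is $\Pi^1_{n+1}$-iterable because $n$ is odd. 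Since $\tau$ is winning, Player I loses the run, so the only rule that can fail is (3'c); that is, $M_n^\sharp(a)$ has a proper initial segment satisfying $\psi$. Equivalently, $W_a$ satisfies ``$M_n^\sharp(\mathbb{R})$ has a proper initial segment satisfying $\psi$,'' and by elementarity of $W_a\prec V_\kappa$ and correctness of $V_\kappa$, $M_n^\sharp(\mathbb{R})$ genuinely has such a segment $M\lhd M_n^\sharp(\mathbb{R})$. This $M$ is the desired sound, countably iterable, $n$-small $\mathbb{R}$-premouse satisfying $\psi$.

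For the converse, suppose $M$ is a sound, countably iterable, $n$-small $\mathbb{R}$-premouse satisfying $\psi$; replacing $\psi$ by its minimality variant, I would take $M$ least, so that $J_1(M)$ projects to $\mathbb{R}$ and, by comparison, $M\lhd M_n^\sharp(\mathbb{R})$. Reflecting as above and refining $C$, I may assume that $M_X$ exists for every $X\in C$. Player II's strategy is to steer the set of reals played into $C$: since $C$ is closed and cofinal in $\mathcal{P}_{\omega_1}(\mathbb{R})$, Player II can dovetail an enumeration on her moves that guarantees the set $X$ of all reals eventually played lies in $C$, regardless of Player I. I claim this wins. If in a run Player I violates~\eqref{rule:reals},~\eqref{rule:dc}, (3'a), or (3'b), he loses outright. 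Otherwise the premouse $\cN$ he describes is sound, projects to $X$, is not $n$-small but has only $n$-small proper segments, and is $\Pi^1_{n+1}$-iterable. Comparing $\cN$ with $M_X$ (which exists since $X\in C$): both are sound and project to $X$, so one is an initial segment of the other, any extenders used in the coiteration come from $n$-small levels, and $\Pi^1_{n+1}$-iterability of $\cN$ certifies correct behavior there. Since $M_X$ is $n$-small and $\cN$ is not, this forces $M_X\lhd\cN$, so $\cN$ has a proper initial segment satisfying $\psi$, contradicting (3'c). Hence Player I loses every run consistent with Player II's strategy.

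The main obstacle is exactly this comparison $M_X\lhd\cN$, in view of the phenomenon flagged in the footnote to (3'b): $\Pi^1_{n+1}$-iterability of a non-$n$-small premouse need not suffice for comparison against an \emph{arbitrary} $n$-small iterable premouse. The point to get right is that here the comparison is against the \emph{specific} premouse $M_X$, which is $n$-small, so the least disagreement between $M_X$ and $\cN$ occurs below the $n$-small cutoff of $\cN$; consequently the coiteration only engages $n$-small extenders of $\cN$, for which its $\Pi^1_{n+1}$-iterability does guarantee wellfounded iterates, and the conclusion $M_X\trianglelefteq\cN$ (hence $M_X\lhd\cN$ by $n$-smallness) goes through. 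I would make this rigorous by invoking the comparison theorem in the form valid for sound premice projecting to a common base and checking that all trees arising remain $n$-small.
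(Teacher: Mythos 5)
Your proposal is correct and follows essentially the same route as the paper: in one direction you run Player I with the theory of $M_n^\sharp(a)$ for $a$ the reals of a countable hull containing Player II's strategy and conclude that only clause (3'c) can fail, and in the other you reflect the least witness to a club of countable $a$, have Player II steer the play into that club, and dispose of the delicate comparison of the non-$n$-small, $\Pi^1_{n+1}$-iterable $\cN$ against the $n$-small witness by exactly the relativized form of Steel's Lemma 3.1 that the paper invokes in its footnote. The remaining differences are cosmetic (hulls of $V_\kappa$ versus of $M_n^\sharp(\mathbb{R})$, and an explicit coiteration against $M_X$ in place of the paper's blanket statement that every such $\cN$ absorbs the least witness).
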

\begin{proof}
Since $M_n^\sharp(\mathbb{R})$ exists, $M_n^\sharp(a)$ exists for all $a\in\mathbb{R}$ and all $a\in\mathcal{P}_{\omega_1}(\mathbb{R})$. By Neeman \cite{Ne04}, we have access to $\PI^1_{n+1}$-determinacy for games on $\mathbb{N}$.

If there is a premouse as in the statement of the lemma, then, as before, there is a  least one, say $M$. As noted above, $M$ projects to $\mathbb{R}$. 
Using that $n$ is odd, we see that if $N$ is a countably $\Pi^1_{n+1}$-iterable $\mathbb{R}$-premouse which is sound and tame but not $n$-small and which projects to $\mathbb{R}$, then $M$ is an initial segment of $N$.\footnote{This follows from the countable iterability assumptions for $M$ and $N$ by relativizing Steel \cite[Lemma 3.1]{St95} to $\mathbb{R}$-premice and applying it to the images of $M$ and $N$ under the collapse embedding within the transitive collapse of a countable elementary substructure of some sufficiently large $V_\kappa$.}  It follows that for a closed, cofinal $C\subseteq\mathcal{P}_{\omega_1}(\mathbb{R})$, if $a \in C$, then every $\Pi^1_{n+1}$-iterable $a$-premouse which is sound and tame but not $n$-small and which projects to $a$ has an initial segment satisfying $\psi$. By a well-known characterization of closed-and-cofinalness (see e.g., Larson \cite{La04}), there is a function 
\[f:[\mathbb{R}]^{<\omega}\to \mathbb{R}\]
such that 
\[C = \{a \in \mathcal{P}_{\omega_1}(\mathbb{R}) : f[[a]^{<\omega}] \subseteq a\}.\]
Fix an enumeration $e$ of $\omega^{<\omega}$.
Let $\sigma$ be the following strategy for Player II in $G^{\text{odd}}_{\psi}$: Given a partial play $p$ in which reals $x_0, \hdots, x_k$ have been played,
\begin{enumerate}
\item if the finite set $x_0, \hdots, x_k$ is closed under $f$, let $\sigma(p) = x_0$,
\item otherwise, let $(i_0, i_1,\hdots i_l)$ be the least tuple (in the sense of $e$) such that $f(x_{i_0}, x_{i_1},\hdots, x_{i_l}) \not \in \{x_0, \hdots, x_k\}$ and set 
\[\sigma(p) = f(x_{i_0}, x_{i_1},\hdots, x_{i_l}).\]
\end{enumerate}
If $p$ is a full play by $\sigma$ and $p^*$ is its real part, then clearly $p^*$ is closed under $f$, so $p^* \in C$; thus $\sigma$ is a winning strategy for Player II.

Suppose now that Player II has a winning strategy, say, $\sigma$.
Since $n$ is odd, $M^\sharp_n(\mathbb{R})$ is countably $\Pi^1_{n+1}$-iterable.
Consider the run $p$ of $G^{\text{odd}}_{\psi}$ in which Player II moves according to $\sigma$ and Player I moves according to the theory of $M^\sharp_n(\mathbb{R})$ and plays the reals of a countable elementary substructure $H$ of $M^\sharp_n(\mathbb{R})$ such that $\sigma \in H$. Because $H$ is closed under finite tuples, every real in $p$ belongs to it; this gives rise to a model $N_p$ which embeds into $H$ (and hence into $M^\sharp_n(\mathbb{R})$) elementarily. Thus, $N_p$ is $\Pi^1_{n+1}$-iterable, not $n$-small, and has no proper initial segment which is not $n$-small. Since $\sigma$ was a winning strategy for Player II, $N_p$ must have a proper (and hence $n$-small) initial segment satisfying $\psi$. By elementarity, $M^\sharp_n(\mathbb{R})$ does too. All initial segments of $M^\sharp_n(\mathbb{R})$ are countably iterable, so the proof of the lemma is finished.
\end{proof}

This completes the proof of Theorem \ref{TheoremSigma1}. To derive Theorem \ref{theoremPointclass}, we need to combine  Theorem \ref{TheoremSigma1} with Lemma \ref{LemmaMnExists} below.

\section{Applications}
\label{SectApplications}
In this section, we list several applications of Theorem \ref{TheoremSigma1} and its proof. We begin with a result on the correctness of $M_n(\mathbb{R})$ concerning the existence of winning strategies.
Martin and Steel's proof of Theorem \ref{theoremPointclass} in the case $n=0$ yields the additional consequence that 
\[(\Game^\mathbb{R}\Pi^1_1)^{L(\mathbb{R})} = \Game^\mathbb{R}\Pi^1_1.\]
Ours does not, although we can prove it directly, assuming more determinacy. We do not know of a proof that does not require $\ad$ to hold in $M_n(\mathbb{R})$.

Begin by observing that if $M_n^\sharp(\mathbb{R})$ exists and satisfies $\ad$, then $\Sigma_1^{M_n(\mathbb{R})}$ has the scale property, by Steel \cite[Theorem 1.1]{St08b}. If so, then by a well-known theorem of Moschovakis \cite{Mo71}, $\Sigma_1^{M_n(\mathbb{R})}$ has the uniformization property.

\begin{theorem}\label{TheoremCorrectnessGames}
Suppose that $n\in\mathbb{N}$, $M_n^\sharp(\mathbb{R})$ exists and satisfies $\ad$, and that $\PI^1_{n+1}$-games on $\mathbb{R}$ are determined. Then,
\begin{enumerate}
\item If $n$ is even, then $(\Game^\mathbb{R}\Pi^1_{n+1})^{M_n(\mathbb{R})} = \Game^\mathbb{R}\Pi^1_{n+1}$.
\item If $n$ is odd, then $(\Game^\mathbb{R}\Sigma^1_{n+1})^{M_n(\mathbb{R})} = \Game^\mathbb{R}\Sigma^1_{n+1}$.
\end{enumerate}
\end{theorem}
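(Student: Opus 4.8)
The plan is to prove each of the two pointclass inclusions by showing, game by game, that for a fixed payoff $A$ (which is $\Pi^1_{n+1}$ when $n$ is even and $\Sigma^1_{n+1}$ when $n$ is odd), a fixed real $x$, and the associated game $G_{A_x}$ on reals with payoff section $A_x$, Player I wins $G_{A_x}$ in $V$ if and only if Player I wins $G_{A_x}$ in $M_n(\mathbb{R})$. The decisive structural fact used throughout is that $\mathbb{R}^{M_n(\mathbb{R})} = \mathbb{R}^V$, so the two models play literally the same real game; the only things that can differ are which strategies exist and whether the winning condition is evaluated correctly.

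First I would dispatch the inclusion $(\Game^\mathbb{R}\Gamma)^{M_n(\mathbb{R})} \subseteq \Game^\mathbb{R}\Gamma$, the soft direction. A strategy for either player in a game on reals is coded by a set of reals, and the assertion ``$\sigma$ is a winning strategy for Player I'' has the form $\forall z\,[(x,\sigma * z) \in A]$, i.e.\ a universal real quantifier over a matrix of complexity $\Pi^1_{n+1}$ or $\Sigma^1_{n+1}$ (and symmetrically, with $\neg A$, for Player II). Since $M_n(\mathbb{R})$ computes $\Pi^1_{n+1}$- and $\Sigma^1_{n+1}$-truth correctly and the real quantifier ranges over the same reals in both models, this assertion is \emph{absolute} (in both directions) between $M_n(\mathbb{R})$ and $V$ for any strategy $\sigma \in M_n(\mathbb{R})$. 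In particular a winning strategy for Player I living in $M_n(\mathbb{R})$ is still winning in $V$, which gives the inclusion.

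The reverse inclusion is the heart of the matter, and here I would deliberately \emph{not} attempt to invoke determinacy of these games inside $M_n(\mathbb{R})$: for $n \geq 2$ the dual payoff class is not Suslin in $M_n(\mathbb{R})$, so Martin's unravelling is unavailable and one cannot expect $M_n(\mathbb{R})$ to decide the game for the ``wrong'' player. Instead I would produce a winning strategy for Player I directly. By Theorem \ref{TheoremSigma1}, $\Game^\mathbb{R}\Gamma = \Sigma_1^{M_n(\mathbb{R})}$, and by the observation preceding the present theorem this pointclass has the scale, hence the prewellordering, property in $M_n(\mathbb{R})$ (using $\ad^{M_n(\mathbb{R})}$ and Steel's theorem). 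Fix $x$ with Player I winning $G_{A_x}$ in $V$. The set $W$ of positions from which Player I wins is a game-quantifier set, hence lies in $\Sigma_1^{M_n(\mathbb{R})}$; fix a $\Sigma_1^{M_n(\mathbb{R})}$-scale on $W$ inside $M_n(\mathbb{R})$ and let $\sigma$ be the canonical ``least-norm'' strategy, in which from a position in $W$ Player I plays a real leading to a winning position of minimal scale-norm. The prewellordering property makes both ``$r$ is the least-norm good move'' and its negation $\Sigma_1^{M_n(\mathbb{R})}$, so $\sigma$ is $\Delta_1^{M_n(\mathbb{R})}$ and is therefore an element of $M_n(\mathbb{R})$. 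A third-periodicity-style argument—following least-norm winning moves keeps Player I inside $W$, and lower semicontinuity of the scale forces the resulting infinite play into the payoff—shows $\sigma$ is winning in $V$; by the absoluteness established above, $\sigma$ is then winning in $M_n(\mathbb{R})$ as well, so $x \in (\Game^\mathbb{R}\Gamma)^{M_n(\mathbb{R})}$. The odd case is entirely parallel, with $\Gamma = \Sigma^1_{n+1}$ now the scaled class and $G^{\text{odd}}$ in place of $G^{\text{even}}$.

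I expect two places to demand the most care. The first is the verification that the canonical strategy is a genuine \emph{element} of $M_n(\mathbb{R})$ rather than merely definable over it; this is exactly where the prewellordering property and the resulting $\Delta_1^{M_n(\mathbb{R})}$ complexity are essential, since an arbitrary $\Sigma_1^{M_n(\mathbb{R})}$ subset of $\mathbb{R}$ need not belong to $M_n(\mathbb{R})$. The second is the least-norm/limit argument adapted to moves that are reals rather than integers: one must check that lower semicontinuity of the scale on $\mathbb{R}^\omega$, taken in the product of the usual topology on $\mathbb{R}$, still guarantees that a play built from least-norm winning positions lands in $A_x$. These are the two steps I would write out in full.
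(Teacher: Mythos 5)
Your proposal follows essentially the same route as the paper: the easy inclusion by upward absoluteness of strategies (using $\mathbb{R}^{M_n(\mathbb{R})}=\mathbb{R}$ and projective correctness), and the hard inclusion by adapting the Third Periodicity argument to games on reals, using the scale property of $\Sigma_1^{M_n(\mathbb{R})}=\Game^\mathbb{R}\Gamma$ under $\ad^{M_n(\mathbb{R})}$ to obtain a canonical winning strategy that is an element of the model. The only point to phrase more carefully is that among real moves there is no unique ``least-norm good move,'' so one must invoke the uniformization property of $\Sigma_1^{M_n(\mathbb{R})}$ (available from the scale property) to select one; this is precisely the step the paper isolates as the sole change from the integer case.
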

\begin{proof}[Proof Sketch]
We suppose for definiteness that $n$ is even; the other case is similar. As the existence of winning strategies for games on $\mathbb{R}$ is upwards absolute from $M_n(\mathbb{R})$, 
it suffices to show that every $\Pi^1_{n+1}$ game on $\mathbb{R}$ has a winning strategy in $\Game^\mathbb{R}\Pi^1_{n+1}$. This follows from the proof of Moschovakis' Third Periodicity Theorem (see Moschovakis \cite[6E.1]{Mo09}) adapted to games on $\mathbb{R}$ (see also \cite[Lemma 3]{Ag18b} for a few more details). The only difference is that in the proof of \cite[6E.1]{Mo09}, which deals with games on $\mathbb{N}$, one defines each step of the winning strategy by selecting the least move which is ``minimal'' in the sense of \cite[6E.1]{Mo09}. In our setting, where moves are elements of $\mathbb{R}$, the collection of minimal moves is $\Game^\mathbb{R}\Pi^1_{n+1}$, so we may use the uniformization property to select one (this is why we require $\ad$).
\end{proof}

{\begin{center}
{{---}{---}}
\end{center}
}

What remains now is to keep the promises made in the introduction.
The next two lemmas together imply Theorem \ref{theoremGamesOnR}:

\begin{lemma} \label{LemmaMnExists}
Let $n$ be odd and 
suppose that $\PI^1_{n+2}$ games on $\mathbb{R}$ are determined, then $M_n^\sharp(\mathbb{R})$ exists.
\end{lemma}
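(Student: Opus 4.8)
The plan is to prove that $\PI^1_{n+2}$-determinacy for games on $\mathbb{R}$ yields the existence of $M_n^\sharp(\mathbb{R})$ for odd $n$, by leveraging the characterization of $\Sigma_1^{M_n(\mathbb{R})}$ already established in Theorem \ref{TheoremSigma1} together with the game-construction machinery from the odd-case lemmas. The key conceptual point is that the games $G^{\text{odd}}_{\psi}$ of Definition preceding Lemma \ref{LemmaHphi} are designed so that the \emph{existence} of a suitable $\mathbb{R}$-premouse corresponds to a player having a winning strategy; the same games can be run to \emph{produce} such a premouse once we have enough determinacy to guarantee that the relevant strategies exist and behave correctly. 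Since the winning set for Player I in $G^{\text{odd}}_{\psi}$ is $\Pi^1_{n+1}$, and we are now assuming the stronger hypothesis of $\PI^1_{n+2}$-determinacy on $\mathbb{R}$, we have more than enough determinacy to analyze these games.

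First I would set up the target object: $M_n^\sharp(\mathbb{R})$ should be the minimal sound, countably iterable, non-$n$-small $\mathbb{R}$-premouse all of whose proper initial segments are $n$-small, projecting to $\mathbb{R}$. The strategy is to reverse the direction of Lemma \ref{LemmaHphi}: instead of assuming $M_n^\sharp(\mathbb{R})$ exists to analyze the game, I would use the determinacy hypothesis to guarantee that the game $G^{\text{odd}}_{\psi}$ (for an appropriate $\psi$ asserting ``I am not $n$-small but all proper initial segments are'') is determined, and argue that Player I cannot have a winning strategy. Indeed, if Player II had a winning strategy, then by running the directed-system construction as in the even case of Lemma \ref{LemmaGphiLP}---joining together the $a$-premice produced along plays for a club of $a\in\mathcal{P}_{\omega_1}(\mathbb{R})$---one would assemble a countably iterable $\mathbb{R}$-premouse with exactly the desired properties, i.e., $M_n^\sharp(\mathbb{R})$. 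So the task reduces to showing that Player II, rather than Player I, wins the relevant game, which is where the extra determinacy is spent.

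The core of the argument is therefore a ``construction from strategies'' in the style of Martin--Steel \cite[Lemma 3]{MaSt08}, exactly as deployed in the proof of Lemma \ref{LemmaGphiLP}: from a winning strategy one extracts, for club-many countable $a\subseteq\mathbb{R}$, a canonical $a$-premouse $N_p$; one verifies the coherence properties (analogues of Claim \ref{ClaimLemmaGphiLP}, namely uniqueness for fixed $a^* = p^*$ and canonical elementary embeddings when $p^*\subseteq q^*$); and one takes the direct limit of the resulting countably closed directed system $\mathcal{D}$ to obtain a wellfounded $\mathbb{R}$-premouse $M$ with $\mathbb{R}^M = \mathbb{R}$. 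The countable iterability of $M$ follows, as in Lemma \ref{LemmaGphiLP}, from the fact that every element of $M$ is definable from a real parameter, so that any countable elementary substructure pulls back along some $j_p^{-1}$ to an $\omega_1$-iterable $N_p$ and hence inherits iterability via the pullback strategy.

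The main obstacle, and the reason the hypothesis is bumped up to $\PI^1_{n+2}$ rather than $\PI^1_{n+1}$, is guaranteeing that the strategy we are handed actually produces premice with the correct largeness and iterability profile when we are \emph{not} assuming $M_n^\sharp(\mathbb{R})$ at the outset. Without the background model, we cannot appeal to the comparison and initial-segment facts (relativized Steel \cite[Lemma 3.1]{St95}) that were freely available in Lemma \ref{LemmaHphi}; instead we must establish countable iterability and the non-$n$-smallness of $M$ directly from the determinacy assumption and the structure of the game, which is precisely the delicate point. I expect the bookkeeping to route through the correctness facts recorded in the Preliminaries---that models closed under $x\mapsto M_{n-1}^\sharp(x)$ are $\SIGMA^1_{n+1}$-correct---applied to the initial segments arising along plays, combined with the fact that $\PI^1_{n+2}$-determinacy on $\mathbb{R}$ implies $\PI^1_{n+2}$-determinacy on $\mathbb{N}$ and thus the requisite comparison theory one level higher than in the even case. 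Closing this gap cleanly, so that the assembled $\mathbb{R}$-premouse is verified to be genuinely $\omega_1$-iterable and minimal, is where the real work lies.
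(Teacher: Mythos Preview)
Your proposal has a genuine structural gap. In $G^{\text{odd}}_{\psi}$ it is Player I, not Player II, who plays the theory and thereby produces the premouse $N_p$; Player II contributes only reals. So a winning strategy for Player II does not by itself yield any directed system of premice to take a limit of---there is nothing to ``join together'' in the style of Lemma~\ref{LemmaGphiLP}. Moreover, with your choice of $\psi$ (``I am not $n$-small but all proper initial segments are''), condition (3$'$c) is vacuously satisfied, since any $\cN$ obeying (3$'$a) already has all proper initial segments $n$-small. The game then reduces to asking whether Player I can describe a $\Pi^1_{n+1}$-iterable, minimally non-$n$-small $a$-premouse; since $\PI^1_{n+2}$-determinacy on $\mathbb{N}$ gives $M_n^\sharp(a)$ for every countable $a$, Player I wins, contrary to what you assert. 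Even if you switch to building the directed system from Player I's strategy, you would face the comparison obstacle flagged in the footnote after the definition of $G^{\text{odd}}_\psi$: $\Pi^1_{n+1}$-iterability does not suffice to compare non-$n$-small premice, so the uniqueness step (the analogue of Claim~\ref{ClaimLemmaGphiLP}\eqref{ClaimLemmaGphiLP3}) is not available.

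The paper's argument avoids all of this by shifting levels: since $n$ is odd, $n+1$ is even, so one applies Lemma~\ref{LemmaGphiLP} at level $n+1$ with $\varphi$ asserting ``there is an active initial segment of me with $n$ Woodin cardinals.'' The payoff of $G^{\text{even}}_{\varphi}$ is then $\Pi^1_{n+2}$, which is exactly why the hypothesis is $\PI^1_{n+2}$-determinacy on $\mathbb{R}$. One shows Player II cannot win (by having Player I play the theory of the minimal $\varphi$-witness inside $M_{n+1}(\mathbb{R}^W)$ for a suitable countable hull $W$), so by determinacy Player I wins, and Lemma~\ref{LemmaGphiLP}---whose hypothesis is satisfied by the determinacy assumption alone---delivers an $(n{+}1)$-small countably iterable $\mathbb{R}$-premouse satisfying $\varphi$, an initial segment of which is $M_n^\sharp(\mathbb{R})$. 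The level shift is the key idea you are missing: it lets you stay entirely within the even-case machinery, where the directed-system construction and the comparison theory (via Steel \cite[Lemma~3.3]{St95}) are already in place.
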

\begin{proof}
We apply Lemma \ref{LemmaGphiLP} to $n+1$. It suffices to show, assuming $\PI^1_{n+2}$-determinacy for games on reals, that Player I has a winning strategy in $G^{\text{even}}_\varphi$, where $\varphi$ is the formula ``there is an active initial segment of me with $n$ Woodin cardinals.'' By $\PI^1_{n+2}$-determinacy for games on reals, it suffices to show that Player II does not have a winning strategy. Towards a contradiction, let $\sigma$ be a winning strategy for Player II. By $\PI^1_{n+2}$-determinacy for games on $\mathbb{N}$, $M_{n+1}^\sharp(A)$ exists for all $A\in\mathcal{P}_{\omega_1}(\mathbb{R})$. Moreover, each $M_{n+1}^\sharp(A)$ satisfies $\varphi$. Let $X$ be a countable elementary substructure of some large $V_\kappa$, with $\sigma \in X$, and let $W$ be the transitive collapse of $X$. Write $\bar\sigma$ for the image of $\sigma$ under the collapse embedding. Consider the play $p$ of $G^{\text{even}}_\varphi$ in which Player I plays the theory of the unique initial segment of $M_{n+1}(\mathbb{R}^W)$ which is a minimal $\varphi$-witness, together with $\mathbb{R}^W$, and Player II plays by $\sigma$. An argument as in \cite[Lemma 3.8]{AgMu} shows that this play is won by Player I, giving the desired contradiction.
\end{proof}

\begin{lemma}
Suppose that $M^\sharp_{n+1}(\mathbb{R})$ exists. Then, $\PI^1_{n+1}$ games on $\mathbb{R}$ are determined.
\end{lemma}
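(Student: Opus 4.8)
The plan is to extract $\Pi^1_{n+1}$-determinacy for games on $\mathbb{R}$ from the $n+1$ Woodin cardinals of $M_{n+1}^\sharp(\mathbb{R})$ by a genericity argument in the style of Neeman \cite{Ne04}, whose methods already cover games with moves in $\mathbb{R}$. Write $\delta_0 < \cdots < \delta_n$ for the Woodin cardinals of $M_{n+1}^\sharp(\mathbb{R})$ and $\mathcal{M} = M_{n+1}(\mathbb{R})$ for the associated proper-class model. The heuristic that governs the Woodin count is that one Woodin is spent making the real-valued moves generic over an iterate, while the remaining $n$ Woodins keep that iterate $\Pi^1_{n+1}$-correct and thus able to adjudicate the payoff; this is exactly why $n+1$ Woodins over $\mathbb{R}$ suffice for a $\Pi^1_{n+1}$ game on $\mathbb{R}$, one more than is needed for the corresponding game on $\mathbb{N}$. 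Fix, then, a $\Pi^1_{n+1}$ game $G$ on $\mathbb{R}$ with payoff code $P$; we argue relative to a real parameter throughout. First I would reflect to countable sets of reals to obtain genuine iterability: by Lemma \ref{LemmaClubReals} (together with the hull argument used there) there is a club $C \subseteq \ctbleset$ such that for every $A \in C$ the mouse $M_{n+1}^\sharp(A) = M_{n+1}^\sharp(\mathbb{R})^{W}$, for the countable $W$ collapsing to $A$, exists, is $\omega_1$-iterable, carries $n+1$ Woodin cardinals, and satisfies $\mathbb{R} \cap M_{n+1}^\sharp(A) = A$. The hypothesis also yields $\Pi^1_{n+2}$-determinacy for games on $\mathbb{N}$ through Neeman \cite{Ne04}, which is ample correctness for what follows.

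Next I would set up an auxiliary iteration game $\tilde G$. In $\tilde G$ the two players produce a sequence of reals $(x_i)_{i<\omega}$ exactly as in $G$ and, in parallel, build a countable iteration tree on $M_{n+1}^\sharp(A)$ — where $A \in C$ is the set of reals produced along the run — realizing the single real $g$ that codes $(x_i)_{i<\omega}$ as a generic over the direct limit $\mathcal{M}_\infty$ for the extender algebra at the image of $\delta_n$; by Woodin's genericity iteration a single Woodin absorbs all $\omega$ of the real moves, disclosed though they are one at a time. The winning condition of $\tilde G$ rewards the distinguished player when the iteration side is legal and $\mathcal{M}_\infty[g] \models \text{``}g \in \check P\text{''}$, and penalizes only illegal iteration play. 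Because the iterate retains $n$ Woodin cardinals it is $\Pi^1_{n+1}$-correct, so its verdict on the $\Pi^1_{n+1}$ statement $g \in P$ is the true one.

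Finally I would transfer determinacy. Since $M_{n+1}^\sharp(A)$ is $\omega_1$-iterable, the player charged with the iteration can always meet the genericity demands and keep the tree wellfounded, so the iteration coordinate never affects the winner; a strategy in $\tilde G$ therefore restricts to a strategy in $G$ on the real coordinates, and conversely. The payoff of $\tilde G$ is the conjunction of a closed condition (wellfoundedness of a countable tree) with a condition decided by a fixed model, hence analytic, so $\tilde G$ is determined by $\Sigma^1_1$-determinacy, a consequence of the hypothesis. A winning strategy for $\tilde G$ then projects to a winning strategy in $G$ for the same player, and dualizing the payoff handles both players, giving determinacy of $G$.

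The hard part will be the iteration-game bookkeeping: arranging simultaneous genericity of all $\omega$ reals over one iterate while they are revealed one at a time, and guaranteeing wellfoundedness of the direct limit from only \emph{countable} iterability. As elsewhere in the paper, this forces the whole construction to be run inside countable hulls $W$ with $A = \mathbb{R}^W$ (as in Lemmas \ref{LemmaGphiLP} and \ref{LemmaSigma1Eequiv}) and transferred by elementarity, and one must verify that the auxiliary payoff genuinely lands in a determined pointclass and that the reduction preserves the winner. All of this is precisely Neeman's mixed/iteration-game technology \cite{Ne04}; the extra $(n+1)$-th Woodin is what lets the genericity absorption go through without strain, which is why we assume $M_{n+1}^\sharp(\mathbb{R})$ here rather than the conjecturally optimal $M_n^\sharp(\mathbb{R})$. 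A more elementary reduction of $G$ to a length-$\omega^2$ game on $\mathbb{N}$, unravelling each real move into $\omega$ integer moves, is available but costlier: by Theorem \ref{theoremLongGames} it would require $M_n^\sharp(\mathbb{R})$ to satisfy $\ad$, which we do not assume.
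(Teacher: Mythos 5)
Your overall strategy---an auxiliary iteration game with genericity iterations in the style of Neeman's optimal determinacy proofs---is not the paper's route, and as sketched it has gaps that go beyond the ``bookkeeping'' you defer. First, the mouse you propose to iterate, $M^\sharp_{n+1}(A)$ for $A$ the set of reals produced along the run, is not available when the iteration must begin: $A$ is determined only at the end of the play, whereas a genericity iteration has to act on a model fixed essentially in advance. For games on $\omega$ one iterates $M^\sharp_n(x)$ for the fixed real parameter $x$; for games on $\mathbb{R}$ there is no countable mouse containing all candidate moves, and this is precisely the obstruction that makes the naive transfer fail. Second, your correctness claim points the wrong way: if $g$ is made generic at the image of the \emph{top} Woodin $\delta_n$, then $\mathcal{M}_\infty[g]$ has no Woodin cardinals above the stage at which $g$ appears, so there is no reason for it to decide the $\Pi^1_{n+1}$ statement ``$g\in P$'' correctly; one needs the $n$ remaining Woodins \emph{above} the genericity point, i.e., genericity at the least Woodin. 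Third, the payoff of $\tilde G$ is not analytic as stated: wellfoundedness is $\Pi^1_1$ rather than closed, the game must somehow certify that the structure being iterated really is a sufficiently iterable $M^\sharp_{n+1}(A)$ (an assertion of higher projective complexity), and $\tilde G$ is itself a game on reals, so even its $\SIGMA^1_1$-determinacy is the $n=0$ instance of the very kind of statement being proved --- it does follow from the hypothesis, but only via Martin's theorem, which you would need as a separate base case.

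The paper's actual proof is much softer and avoids all of this. Martin--Steel's and Neeman's proofs of $\PI^1_{n+1}$-determinacy from $n$ Woodin cardinals with a measurable above are stated for games whose moves come from an arbitrary set $X$; nothing in the construction uses that the moves are integers. One therefore simply works inside $M^\sharp_{n+1}(\mathbb{R})$ --- which contains every real and has $n$ Woodin cardinals with a measurable above them; this, not a genericity budget, is the real reason for the ``$+1$'' --- and applies that theorem with $X=\mathbb{R}$. Corollary \ref{CorollaryTopologies} bridges the gap between ``projective'' in the discrete product topology (the sense in which Neeman's theorem applies) and projective in the usual sense, and since the model contains all reals, hence all possible plays of any game on $\mathbb{R}$, the existence of winning strategies is absolute between it and $V$. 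No auxiliary game, no genericity iteration, and no reflection to countable hulls is needed.
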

\proof
This is almost immediate from Martin and Steel's \cite{MaSt89} or Neeman's \cite{Ne10} proof of projective determinacy.

Work in $M^\sharp_{n+1}(\mathbb{R})$. We actually only need that there are $n$ Woodin cardinals and a measurable above (within this model). By \cite[Corollary 5.31]{Ne10}, $\PI^1_{n+1}$ games are determined. However, \cite[Corollary 5.31]{Ne10} is a corollary of \cite[Exercise 5.29]{Ne10} and \cite[Corollary 4.19]{Ne10}, which are stated for games on an arbitrary set $X$. If we take $X = \mathbb{R}$, we may conclude the determinacy of games on $\mathbb{R}$ which are $\PI^1_{n+1}$ in the sense of Remark \ref{RemarkProjective}. By Corollary \ref{CorollaryTopologies}, this suffices. 

We have shown that 
\[M^\sharp_{n+1}(\mathbb{R})\models \text{``$\PI^1_{n+1}$-determinacy for games on $\mathbb{R}$.''}\]
Since $M^\sharp_{n+1}(\mathbb{R})$ contains all reals, and thus all countable sequences of reals and all possible plays of any game on $\mathbb{R}$, the existence of winning strategies is absolute to $V$. This completes the proof.
\endproof

We conclude with the following strengthening of Theorem \ref{theoremLongGames}:

\begin{theorem}
Suppose $M^\sharp_n(\mathbb{R})$ exists.
The following are equivalent:
\begin{enumerate}
\item $\PI^1_{n+1}$-determinacy for games on $\mathbb{N}$ of length $\omega^2$;
\item $\PI^1_{n+1}$-determinacy for games on $\mathbb{R}$ of length $\omega$ and $M^\sharp_n(\mathbb{R})\models\ad$.
\end{enumerate}
\end{theorem}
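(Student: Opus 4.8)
The plan is to prove the two implications separately, using the characterization of $\Sigma_1^{M_n(\mathbb{R})}$ from Theorem \ref{TheoremSigma1} together with the model-game technology developed above (and in \cite{AgMu}) to bridge between length-$\omega^2$ games on $\mathbb{N}$ and length-$\omega$ games on $\mathbb{R}$. The guiding principle is that a length-$\omega^2$ game on $\mathbb{N}$ splits into an \emph{outer} length-$\omega$ game on $\mathbb{R}$ (whose moves are the reals $r_k$ produced in successive $\omega$-blocks) and a family of \emph{inner} games played within the blocks; the outer game is governed by $\Pi^1_{n+1}$-determinacy for games on $\mathbb{R}$, while the inner games are exactly the ones $M_n(\mathbb{R})$ must determine for $\ad$ to hold there.

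First I would treat $(1)\Rightarrow(2)$. The determinacy of $\Pi^1_{n+1}$ length-$\omega$ games on $\mathbb{R}$ follows by \emph{unfolding}: given such a game, let each real move occupy one $\omega$-block, with the player not to move making dummy moves that the payoff ignores. This is a $\Pi^1_{n+1}$ game on $\mathbb{N}$ of length $\omega\cdot\omega=\omega^2$, hence determined by hypothesis, and a winning strategy restricts to one in the original game. For $M_n(\mathbb{R})\models\ad$, I would show that every game on $\mathbb{N}$ with payoff $A\in M_n(\mathbb{R})$ is determined by reducing it to a single $\Pi^1_{n+1}$ length-$\omega^2$ game: in a first block of length $\omega$ the players produce a play $z$, and in the remaining $\omega$-many blocks they play a model game of the type defining $G^{\text{even}}_\varphi$ (or $G^{\text{odd}}_\psi$, according to the parity of $n$) to build an initial segment $N\lhd M_n(\mathbb{R})$ containing $z$, the winner being decided by whether $N\models z\in A$. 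The crucial point is that, because membership $z\in A$ is evaluated \emph{inside} the built model $N$ while the only external constraint on $N$ is $\Pi^1_{n+1}$-iterability, the payoff of the combined game is $\Pi^1_{n+1}$ \emph{regardless of the complexity of $A$ in $M_n(\mathbb{R})$}; this is what allows a fixed projective level of length-$\omega^2$ determinacy to capture all of $\ad^{M_n(\mathbb{R})}$. As in Lemmas \ref{LemmaGphiLP} and \ref{LemmaHphi}, the iterability rules force any model built in accordance with the rules to be correct about $z\in A$, so the combined game has the same winner as the $A$-game, determinacy of the former (from $(1)$) transfers, and the correctness of $M_n(\mathbb{R})$ for such games places the strategy inside $M_n(\mathbb{R})$.

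Next I would treat $(2)\Rightarrow(1)$. Fix a $\Pi^1_{n+1}$ length-$\omega^2$ game $G$ on $\mathbb{N}$ and consider its two real-game simulations on $\mathbb{R}$: in $\hat G$ Player I commits the real $a_k$ (her block-$k$ moves) before II responds with $b_k$, while in $\check G$ the order within each block is reversed. Both are $\Pi^1_{n+1}$ games on $\mathbb{R}$, hence determined by hypothesis. Since $\hat G$ gives Player I strictly less information than $G$ does, a win for I in $\hat G$ yields a win for I in $G$; dually, a win for II in $\check G$ yields a win for II in $G$. The only remaining case is that II wins $\hat G$ while I wins $\check G$, i.e.\ an \emph{information gap} within the blocks. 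Here I would invoke $M_n(\mathbb{R})\models\ad$: at each block the question of which player should receive the within-block information advantage is decided by an auxiliary game on $\mathbb{N}$ whose payoff is the continuation value, which by Theorem \ref{TheoremSigma1} is $\Sigma_1^{M_n(\mathbb{R})}=\Game^\mathbb{R}\Pi^1_{n+1}$ (respectively $\Game^\mathbb{R}\Sigma^1_{n+1}$) and hence lies in $M_n(\mathbb{R})$; determining it there by $\ad$ produces, block by block, a strategy that closes the gap and assembles with the outer strategy into a winning strategy for $G$.

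The main obstacle is precisely this last step: verifying that the auxiliary within-block games have payoffs in $M_n(\mathbb{R})$ (so that $\ad$ there applies) and that resolving them yields a single, coherent strategy for the full length-$\omega^2$ game. This demands careful tracking of the information flow between the outer real-game strategies and the inner block-strategies — the same delicate point that makes the extra hypothesis $M_n(\mathbb{R})\models\ad$, rather than mere $\Pi^1_{n+1}$-determinacy for games on $\mathbb{R}$, genuinely necessary. The model-game analysis of \cite{AgMu} and the pointclass identification of Theorem \ref{TheoremSigma1} are exactly the tools that let one carry it out.
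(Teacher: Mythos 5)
Your overall architecture---split the length-$\omega^2$ game into an outer length-$\omega$ game on $\mathbb{R}$ and inner block games, use the model games for one direction and Theorem \ref{TheoremSigma1} for the other---matches the paper's, and your $(1)\Rightarrow(2)$ is essentially the intended argument. (The paper phrases the $\ad$ part contrapositively: it reflects a failure of $\ad$ in $M_n^\sharp(\mathbb{R})$ to a club of countable $A$ with $M_n^\sharp(A)\not\models\ad$ and then cites \cite[Theorem 3.1]{AgMu}. Note that a length-$\omega^2$ game on $\mathbb{N}$ produces only countably many reals, so the premouse your combined game builds is an $A$-premouse for a \emph{countable} $A$, not literally an initial segment of $M_n(\mathbb{R})$; the missing reflection step---passing from determinacy inside $M_n^\sharp(A)$ for club-many countable $A$ to $\ad$ in $M_n^\sharp(\mathbb{R})$---is exactly what the paper's club argument supplies, and it is fixable.)

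The genuine gap is in $(2)\Rightarrow(1)$, at precisely the point you flag as the main obstacle. Your auxiliary game ``whose payoff is the continuation value'' is circular as stated: the continuation of a length-$\omega^2$ game after finitely many blocks is again a length-$\omega^2$ game, so its ``value'' is not a set of any identifiable complexity until the determinacy you are trying to prove has been established. The paper's route (adapting \cite[Lemma 3.11 and Theorem 3.12]{Ag18c}) instead works with the sets ``Player I wins the commits-first version of the tail game from position $p$,'' which genuinely are $\Game^{\mathbb{R}}\Pi^1_{n+1}$ and hence $\Sigma_1^{M_n(\mathbb{R})}$ by Theorem \ref{TheoremSigma1}, and the assembly of block strategies into one strategy for $G$ then rests on three facts you never invoke: that $M_n(\mathbb{R})$ is \emph{correct} about which player wins these games on $\mathbb{R}$ (Theorem \ref{TheoremCorrectnessGames}), that $\Sigma_1^{M_n(\mathbb{R})}$ has the uniformization property (via the scale property of \cite{St08b}, which is where $\ad^{M_n(\mathbb{R})}$ actually enters), and the Solovay basis theorem for $M_n(\mathbb{R})$. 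Without these, ``determining the auxiliary games inside $M_n(\mathbb{R})$'' does not cohere into a single strategy that wins in $V$: you would be choosing block strategies with no definable selection and no guarantee that the inner model's verdicts about games on $\mathbb{R}$ are true externally. So the skeleton ($\hat G$/$\check G$, the easy transfer of wins, the identification of the relevant pointclass) is right, but the load-bearing step of the hard direction is missing.
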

\begin{proof}[Proof Sketch]
Suppose that $\PI^1_{n+1}$-determinacy for games on $\mathbb{N}$ of length $\omega^2$ holds and, towards a contradiction, that $M^\sharp_n(\mathbb{R})\not\models\ad$. By taking transitive collapses of countable elementary substructures, one sees that there is a closed, cofinal $C\subseteq\mathcal{P}_{\omega_1}(\mathbb{R})$ such that for all $A\in C$,
\[M_n^\sharp(A)\not\models\ad,\]
which contradicts \cite[Theorem 3.1]{AgMu}. 

Conversely, suppose that $\PI^1_{n+1}$ games on $\mathbb{R}$ of length $\omega$ are determined and that $M^\sharp_n(\mathbb{R})\models\ad$. Assume for definiteness that $n$ is even; the other case is similar.
By Theorem \ref{TheoremSigma1}, all sets in $\Game^\mathbb{R}\PI^1_{n+1}$ are determined. The result now follows by adapting the argument of \cite[Lemma 3.11]{Ag18c} exactly as in \cite[Theorem 3.12]{Ag18c}. This is possible because all properties about $L(\mathbb{R})$ used in argument have been verified above to also hold of $M^\sharp_n(\mathbb{R})$; namely, the fact that $L(\mathbb{R})$ is correct about Player I winning $\PI^1_1$ games on $\mathbb{R}$
(which generalizes to $M^\sharp_n(\mathbb{R})$ by Theorem \ref{TheoremCorrectnessGames}), the fact that $\Sigma_1^{L(\mathbb{R})}$ has the uniformization property (which generalizes to 
$M^\sharp_n(\mathbb{R})$ by the remark in the beginning of this section), as well as Solovay's basis theorem (which can be proved for $M^\sharp_n(\mathbb{R})$ the same way as for $L(\mathbb{R})$; see Koellner-Woodin \cite{KW10} for a proof).
\end{proof}
\endproof

\bibliographystyle{abbrv}
\bibliography{References}

\end{document}